\newtheorem{theorem}{Theorem}[section]
\newtheorem{lemma}[theorem]{Lemma}
\newtheorem{corollary}[theorem]{Corollary}
\theoremstyle{definition}
\newtheorem{definition}{Definition}[section]
\theoremstyle{remark}
\newtheorem{remark}[definition]{Remark}
\numberwithin{equation}{section}
\newcommand{\abs}[1]{\lvert#1\rvert}
\newcommand{\norm}[1]{\left\lVert#1\right\rVert}
\newcommand{\R}{\mathbb{R}}
\newcommand{\C}{\mathbb{C}}
\newcommand{\N}{\mathbb{N}}
\DeclareMathOperator{\spt}{spt}
\title[non-existence]{Non-existence of a  Wente's $L^\infty$ estimate for the Neumann problem}
\author[J. Hirsch]{Jonas Hirsch}
\address[Jonas Hirsch]{Scuola Internazionale Superiore di Studi Avanzati, via Bonomea, 265, 34136 Trieste, ITALY}
\email{jonas.hirsch@sissa.it}
\begin{document}

\begin{abstract}
We provide a counterexample of Wente's inequality in the context of Neumann boundary conditions. We will also show that Wente's estimates fails for general boundary conditions of Robin type.

\vspace{4pt}
\noindent \textsc{Keywords: Compensated compactness, Jacobian determinants} 

\vspace{4pt}
\noindent \textsc{AMS subject classification (2010): 35J05, 35J25} 
\end{abstract}

\maketitle
\section{introduction}\label{sec.introduction}

Wente's $L^\infty$-estimate is a fundamental example of a 'gain' of regularity due to the special structure of Jacobian determinants. It concerns the the following Dirichlet problem:
\begin{equation}\label{eq:Dirichlet}\left\{\begin{aligned}
-\Delta u &= f 
 &&\text{ in } D\\
u &= 0 &&\text{ on } \partial D
\end{aligned}\right.
\end{equation}
for the specific choice of  $f=\det( \nabla V)$ with $V \in H^1(D,\R^2)$.
Wente's theorem states:
\begin{theorem}\label{thm.wente}
Let $\Omega \subset \R^2$ be the disc and $f \in \mathcal{H}^1(D)$. Then if $u$ is the unique solution in $W^{1,1}_0(\Omega, \R)$ to \eqref{eq:Dirichlet}, we have the estimate
\[
\norm{u}_{L^\infty(D)} + \norm{du}_{L^2(D)} \le C \norm{\nabla V}_{L^2(D)}^2.
\]
\end{theorem}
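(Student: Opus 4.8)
The plan is to split the estimate into two independent pieces. The first is structural and only about the datum: since $f=\det(\nabla V)$ has \emph{div--curl} form — writing $V=(a,b)$, one has $f=\partial_{x}(a\,\partial_{y}b)-\partial_{y}(a\,\partial_{x}b)$ — it lies in the real Hardy space with
\[
\norm{f}_{\mathcal{H}^{1}}\le C\,\norm{\nabla V}_{L^{2}(D)}^{2},
\]
which is the Coifman--Lions--Meyer--Semmes theorem (and can also be obtained by testing $f$ against a single atom and integrating by parts). I read the hypothesis $f\in\mathcal{H}^{1}(D)$ as ``$f$ is the restriction of an element of $\mathcal{H}^{1}(\R^{2})$ supported in $\overline{D}$'', equivalently $V$ is taken with $\nabla V\in L^{2}(\R^{2})$; in particular $\int f=0$. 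Granting the above, what remains is the \emph{linear} estimate
\[
\norm{u}_{L^{\infty}(D)}+\norm{\nabla u}_{L^{2}(D)}\le C\,\norm{f}_{\mathcal{H}^{1}}
\]
for the solution of \eqref{eq:Dirichlet} with a general $f\in\mathcal{H}^{1}$. The tools behind it are Fefferman's $\mathcal{H}^{1}$--$\mathrm{BMO}$ duality together with two elementary facts in the plane: $\log\abs{\,\cdot\,}\in\mathrm{BMO}(\R^{2})$, and $W^{1,2}(\R^{2})\hookrightarrow\mathrm{BMO}(\R^{2})$ with $\norm{w}_{\mathrm{BMO}}\le C\norm{\nabla w}_{L^{2}}$.

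First I would establish the linear estimate as an \emph{a priori} bound for smooth, compactly supported, mean-zero $f$ (so that $u\in C^{\infty}(\overline{D})$). For the energy, test the equation with $u$ (extended by zero to $\R^{2}$, still in $W^{1,2}$) and use duality:
\[
\norm{\nabla u}_{L^{2}(D)}^{2}=\int_{D}f\,u\le C\,\norm{f}_{\mathcal{H}^{1}}\,\norm{u}_{\mathrm{BMO}(\R^{2})}\le C\,\norm{f}_{\mathcal{H}^{1}}\,\norm{\nabla u}_{L^{2}(D)},
\]
and absorb one power of $\norm{\nabla u}_{L^{2}}$. For the sup bound, write $u=w+h$ with $w=N\ast f$ the Newtonian potential, $N(x)=\tfrac{1}{2\pi}\log\tfrac{1}{\abs{x}}$, and $h$ the harmonic function on $D$ equal to $-w$ on $\partial D$. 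Using $\int f=0$ to subtract a constant, $w(x)=\langle f,\,N(x-\cdot)\rangle$, so duality and translation invariance give $\abs{w(x)}\le C\,\norm{f}_{\mathcal{H}^{1}}\,\norm{\log\abs{\,\cdot\,}}_{\mathrm{BMO}}\le C\,\norm{f}_{\mathcal{H}^{1}}$ uniformly in $x$; the maximum principle then yields $\norm{h}_{L^{\infty}(D)}\le\norm{w}_{L^{\infty}(\partial D)}\le C\,\norm{f}_{\mathcal{H}^{1}}$, hence $\norm{u}_{L^{\infty}(D)}\le C\,\norm{f}_{\mathcal{H}^{1}}$.

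Next I would remove the smoothness assumption by approximation. Pick smooth mean-zero $f_{k}\to f$ in $\mathcal{H}^{1}$ and let $u_{k}$ solve \eqref{eq:Dirichlet} with datum $f_{k}$. In two dimensions $\mathcal{H}^{1}$ embeds continuously into $W^{-1,2}(D)=(W^{1,2}_{0}(D))^{*}$ — again because the pairing $\langle f,\varphi\rangle$ is controlled by $\norm{f}_{\mathcal{H}^{1}}\,\norm{\varphi}_{\mathrm{BMO}}\le C\,\norm{f}_{\mathcal{H}^{1}}\,\norm{\nabla\varphi}_{L^{2}}$ — so $u_{k}\to u$ in $W^{1,2}_{0}(D)$ and, along a subsequence, a.e.; the uniform $L^{\infty}$ bound passes to the limit. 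Thus $u\in W^{1,2}_{0}(D)\cap L^{\infty}(D)$ with the stated bounds, and by uniqueness in $W^{1,1}_{0}$ this $u$ is the solution in the statement. Combining with the first paragraph proves the theorem.

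The main obstacle is not any single estimate but the bookkeeping of Hardy-space theory on the bounded domain $D$: pinning down precisely which space $\mathcal{H}^{1}(D)$ denotes, the density of smooth mean-zero functions in it, the legitimacy of the cancellation $\int f=0$ and of the duality bracket, and the continuous embedding $\mathcal{H}^{1}(D)\hookrightarrow W^{-1,2}(D)$; plus supplying a proof (or a precise citation) for the Jacobian estimate $\norm{\det(\nabla V)}_{\mathcal{H}^{1}}\le C\,\norm{\nabla V}_{L^{2}}^{2}$. If one prefers to avoid Hardy spaces altogether, there is the alternative, more computational route of the original arguments of Wente and of Brezis--Coron: expand $u$ against the explicit Green's function of the disc in Fourier modes and exploit the bilinear cancellation in $a_{x}b_{y}-a_{y}b_{x}$ directly to bound the coefficients.
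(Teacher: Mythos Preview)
The paper does not actually prove Theorem~\ref{thm.wente}: it is stated as background and immediately attributed to the literature (the original article of Wente for $f=\det(\nabla V)$, and H\'elein and Topping for the Hardy-space version). So there is no ``paper's own proof'' to compare against; the theorem is used as a black box throughout.

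That said, your sketch is a faithful outline of the standard Hardy-space proof (essentially the approach in H\'elein's book, going through Coifman--Lions--Meyer--Semmes and $\mathcal{H}^1$--$\mathrm{BMO}$ duality), and the alternative you mention at the end --- direct Fourier computation on the disc exploiting the bilinear cancellation --- is precisely the route of the original Wente and Brezis--Coron arguments. Both are among the references the paper cites. Your caveats about which local Hardy space $\mathcal{H}^1(D)$ is meant and whether $\int_D f=0$ are well taken: the paper sidesteps this by pointing to \cite[Definition~1.90]{Semmes} for the definition, and in any case only ever applies the theorem with $f=\det(\nabla V)$ for $V\in H^1$, where the $\norm{\nabla V}_{L^2}^2$ bound is what is actually used.
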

Proofs can be found in the original article \cite{Wente}. Later one it had been proven that Wente's inequality hold true under the slightly weaker assumption that $f \in \mathcal{H}^1(D)$, where $\mathcal{H}^1(D)$ is the local Hardy space, see \cite[Definition 1.90]{Semmes}. Proofs can be found for instance in \cite{Helein} and \cite{Topping}. This estimate found many applications, a non complete list includes \cite{Riviere}, \cite{ColdingMinicozzi}, \cite{LammLin}. 

It is natural to ask whether a similar estimate holds true for the Neumann problem:
\begin{equation}\label{eq:Neumann}\left\{\begin{aligned}
-\Delta u &= f
 &&\text{ in } D\\
\frac{\partial u}{\partial \nu} &= \frac{1}{2\pi} \int_{D} f &&\text{ on } \partial D
\end{aligned}\right.\end{equation}
again for the specific choice of $f=\det( \nabla V)$ with $V \in H^1(D,\R^2)$.\\

The aim of this note is to show that Wente's $L^\infty$ estimate fails for the Neumann problem

\begin{theorem}\label{prop.counterexample}
There exists a sequence $V_n=(a_n,b_n) \in C^\infty(\overline{D}, \R^2)$, $\norm{\nabla V_n}_{L^{2,1}(D)} \le C$ for all $n$ with the property that if $u_n \in W^{1,1}(D)$ are the solutions to \eqref{eq:Neumann} with $f_n=\det(\nabla V_n)$ one has 
\[  \norm{u_n}_{L^\infty(D)}, \norm{ \nabla u_n}_{L^2(D)} \to  + \infty \text{ as } n \to \infty. \]
\end{theorem}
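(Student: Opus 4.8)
The plan is to construct the counterexample explicitly by exploiting the boundary term in \eqref{eq:Neumann}. The key observation is that for the Neumann problem the solution is $u = u_1 + u_2$ where $-\Delta u_1 = f$ with $\partial u_1/\partial\nu = 0$ is controlled by the Wente-type theory, while $u_2$ is harmonic with $\partial u_2/\partial\nu = \frac{1}{2\pi}\int_D f$ constant on $\partial D$. Thus $u_2$ is (up to an additive constant) a fixed multiple of a fixed harmonic function, and on its own is bounded; the danger must instead come from a more subtle interaction, so rather than splitting this way I would look directly at test functions. Concretely, I would pick $V_n = (a_n, b_n)$ so that $f_n = \det(\nabla V_n) = \partial_x a_n \, \partial_y b_n - \partial_y a_n \, \partial_x b_n$ concentrates its mass near a boundary point, say near $(1,0) \in \partial D$, on a scale $\lambda_n \to 0$, while keeping $\|\nabla V_n\|_{L^{2,1}(D)}$ bounded. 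The natural model is $a_n, b_n$ built from rescaled bump profiles $\varphi(\,\cdot/\lambda_n\,)$ centered at the boundary; the Lorentz norm $L^{2,1}$ is scale-invariant in two dimensions for such concentrating families, which is exactly why the bound $\|\nabla V_n\|_{L^{2,1}} \le C$ is attainable and why this does not contradict Theorem \ref{thm.wente} (which needs the $\h^1$/Jacobian structure to interact with the \emph{Dirichlet} Green's function, not the Neumann one).

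The heart of the argument is then to test the weak formulation of \eqref{eq:Neumann} against a suitable function and show the resulting pairing blows up. I would test against $\psi_n$ chosen to be (close to) the Neumann Green's function $G_N(\cdot, p_n)$ with pole at a point $p_n \in \partial D$ near the concentration region; for the Neumann Laplacian on the disc this Green's function has a boundary singularity of the form $-\frac{1}{\pi}\log|x - p_n|$ (twice the interior logarithm, because the pole sits on the boundary), so $\|G_N(\cdot,p_n)\|_{L^\infty}$ is infinite and, more usefully, $\int_D f_n \, G_N(\cdot,p_n)\,dx \sim c\,|\log \lambda_n| \to \infty$ while simultaneously $\|\nabla G_N\|_{L^{2,\infty}}$ stays bounded. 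Pairing $u_n$ against $-\Delta G_N(\cdot,p_n) = \delta_{p_n} - \frac{1}{\pi}$ (the Neumann datum correction) recovers $u_n(p_n)$ up to controlled terms, which forces $\|u_n\|_{L^\infty} \to \infty$; the $L^2$-gradient blow-up follows either from elliptic estimates relating the two norms on this family or from a parallel computation with $\nabla G_N \in L^{2,\infty}$ paired against $\nabla u_n$ via the $L^{2,1}$–$L^{2,\infty}$ duality, using that $\int_D f_n G_N \le \|\nabla u_n\|_{L^2}\|\nabla G_N\|_{L^2(\operatorname{supp})}$ cannot hold with bounded right side.

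In carrying this out I would proceed in the following order: (i) fix the profiles and define $V_n$, verifying $V_n \in C^\infty(\overline D)$ and $\|\nabla V_n\|_{L^{2,1}} \le C$ by the scaling computation; (ii) compute $f_n = \det(\nabla V_n)$ and its total mass $\int_D f_n\,dx$, checking it does not vanish (this is the compatibility/normalization that makes the Neumann datum nontrivial) and that $f_n$ has a definite sign or at least a non-canceling concentration so that the pairing with $G_N$ does not accidentally vanish; (iii) write down the Neumann Green's function on the disc explicitly (reflection/image-charge formula) and record its boundary logarithmic singularity and the membership $\nabla G_N \in L^{2,\infty}$; (iv) plug into the weak formulation to get the lower bound $|u_n(p_n)| \gtrsim |\log\lambda_n| - C$; (v) deduce the gradient blow-up. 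The main obstacle I anticipate is step (ii) combined with the sign issue in step (iii): a generic Jacobian $\det(\nabla V_n)$ changes sign and its integral against the (signed) logarithmic kernel can cancel, so the profiles must be engineered — e.g. by choosing $a_n$ depending essentially on one direction and $b_n$ on the other near $p_n$, or by using a holomorphic-type ansatz $a_n + i b_n$ so that $\det(\nabla V_n) = |\partial(a_n+ib_n)|^2 - |\bar\partial(a_n+ib_n)|^2$ has a controllable sign — so that the concentrating mass of $f_n$ genuinely pairs with the unbounded part of $G_N$ rather than being killed by oscillation. Getting the scale-invariant $L^{2,1}$ bound and the non-cancellation to hold \emph{simultaneously} is the delicate point; everything else is a direct, if slightly technical, computation.
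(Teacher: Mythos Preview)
Your opening decomposition is confused: a solution $u_1$ of $-\Delta u_1=f$ with $\partial u_1/\partial\nu=0$ \emph{is} the Neumann solution (up to the constant Neumann datum), and it is precisely this object that the theorem asserts is \emph{not} controlled by Wente-type theory; moreover there is no harmonic $u_2$ with nonzero constant Neumann datum (compatibility fails), so the split does not even exist when $\int_D f\neq 0$. You rightly abandon this and the remainder of the proposal is sound in outline, but the route differs from the paper's.

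The shared core is identical: concentrate a nonnegative Jacobian at a boundary point while keeping $\|\nabla V_n\|_{L^{2,1}}$ bounded by scale invariance, using a holomorphic ansatz to force $\det(\nabla V_n)=|h_n'|^2\ge 0$. The paper does this with the explicit M\"obius family $m_\epsilon(z)=(z-i\epsilon)/(z+i\epsilon)$ on the half-plane, cut off and translated, and computes the $L^{2,1}$ bound directly from the distribution function of $|m_\epsilon'|$. Where you diverge is in detecting the blow-up. The paper writes the Neumann solution as $v_n=u_n-h_n$ with $u_n$ the \emph{Dirichlet} solution (bounded in $H^1$ by Wente) and $h_n$ harmonic with matching normal derivative; it then shows $\|da_n\wedge db_n\|_{H^{-1}}\to\infty$ (since the limit is a boundary Dirac) and invokes the continuity of the inverse Dirichlet-to-Neumann map to force $\|\nabla h_n\|_{L^2}\to\infty$. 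Your route---pairing with the explicit Neumann Green's function $G_N(\cdot,p)$ at a boundary pole to extract the logarithmic divergence $\int f_n G_N\sim c|\log\lambda_n|$---is more direct for the $L^\infty$ statement and avoids the Dirichlet-to-Neumann machinery; the paper's decomposition, on the other hand, generalises immediately to Robin-type conditions (Section~\ref{sec.boundary conditions}) by replacing the Dirichlet-to-Neumann map with an analogous boundary operator.

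Your argument for $\|\nabla u_n\|_{L^2}\to\infty$ is the weak point. The inequality you write, $\int_D f_n G_N\le \|\nabla u_n\|_{L^2}\|\nabla G_N\|_{L^2(\mathrm{supp})}$, is not usable since $\nabla G_N(\cdot,p)\notin L^2$ near the boundary pole. The clean argument is: if $\|\nabla u_n\|_{L^2}$ stayed bounded, the weak formulation would give $|\int_D f_n\psi|\le C\|\psi\|_{H^1(D)}$ uniformly, i.e.\ $f_n$ bounded in $(H^1(D))^*$; but testing with the truncated logarithm $\psi_n=-\log(\max(|x-p|,\lambda_n))$ (suitably cut off) gives $\int f_n\psi_n\sim |\log\lambda_n|$ while $\|\psi_n\|_{H^1}\sim|\log\lambda_n|^{1/2}$, a contradiction. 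This is essentially the statement $\delta_p\notin (H^1(D))^*$ for $p\in\partial D$, which is also what drives the paper's $H^{-1}$ blow-up.
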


More in general we  can extend the above example to more general boundary conditions. Namely we have the following: 

\begin{theorem}\label{prop:Robinboundary}
Let $E \subset \partial D$ be a nonempty union of open intervals, with $0<\mathcal{H}^1(E)<2\pi$ and $\alpha, \beta, \gamma \in \R$ given, with $\alpha >0, \gamma \ge 0$. There exists a sequence $V_n= (a_n, b_n) \in C^\infty(\overline{D},\R^2)$, with $\norm{\nabla V_n}_{L^{2,1}(D)} < C$ with the property that if $u_n \in W^{1,1}(D)$ is the solution to 
\begin{equation}\label{eq:Robin}\left\{\begin{aligned}
-\Delta u_n &= \det(\nabla V_n) 
 &&\text{ in } D\\
\alpha \frac{\partial u_n}{\partial \nu} +\beta \frac{\partial u_n}{\partial \tau} + \gamma u_n &= 0 &&\text{ on } E \\
u &= 0 &&\text{ on } \partial D \setminus E
\end{aligned}\right.\end{equation}
one has 
\[ \norm{\nabla u_n}_{L^2(D)} \to \infty \text{ as } n \to \infty.\]
\end{theorem}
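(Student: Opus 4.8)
The plan is to reduce the general problem to the Neumann counterexample of Theorem 1.3 by a suitable choice of the test fields $V_n$ and a contradiction-compactness argument. First I would fix the counterexample sequence $V_n = (a_n,b_n)$ from Theorem 1.3, possibly after a conformal rescaling and a cutoff so that the associated Jacobians $f_n = \det(\nabla V_n)$ concentrate near a point $p \in \partial D \setminus \overline E$ (this is where the hypothesis $\mathcal H^1(E) < 2\pi$ enters: it guarantees such a $p$ exists, together with a neighborhood in $\partial D$ disjoint from $E$). I would keep $\norm{\nabla V_n}_{L^{2,1}(D)} \le C$ and $\norm{f_n}_{\h^1}$ bounded, exactly as in Theorem 1.3.

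The key steps are then as follows. (i) Write $u_n = w_n + r_n$, where $w_n$ solves the pure Neumann problem \eqref{eq:Neumann} with datum $f_n$ — so $\norm{\nabla w_n}_{L^2} \to \infty$ by Theorem 1.3 — and $r_n$ is the correction that adjusts the boundary behaviour. The correction $r_n$ is harmonic in $D$ and must satisfy the mixed Robin/Dirichlet condition $\alpha \partial_\nu r_n + \beta \partial_\tau r_n + \gamma r_n = -(\alpha \partial_\nu w_n + \beta \partial_\tau w_n + \gamma w_n)$ on $E$ and $r_n = -w_n$ on $\partial D \setminus E$. (ii) Establish that the boundary data driving $r_n$ are controlled: since $f_n$ concentrates away from a fixed neighborhood of $E$, interior elliptic estimates for $w_n$ away from the concentration point give uniform bounds on $w_n$, $\nabla w_n$ on a fixed neighborhood of $\partial D$ — crucially, the blow-up of $\norm{\nabla w_n}_{L^2(D)}$ is localized near $p$, not near $E$ or near $\partial D \setminus E$. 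Hence $r_n$ solves a well-posed linear mixed boundary value problem (coercivity coming from $\alpha>0$, $\gamma\ge0$, and the Dirichlet part on a set of positive measure) with \emph{uniformly bounded} data, so $\norm{\nabla r_n}_{L^2(D)} \le C$. (iii) Conclude $\norm{\nabla u_n}_{L^2} \ge \norm{\nabla w_n}_{L^2} - \norm{\nabla r_n}_{L^2} \to \infty$.

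I expect the main obstacle to be step (ii): showing that the blow-up genuinely localizes near $p$ and does not leak to the part of the boundary where the mixed condition is imposed. This requires a careful quantitative version of the construction in Theorem 1.3 — one must exhibit $V_n$ such that not only $\norm{\nabla u_n^{\text{Neumann}}}_{L^2} \to \infty$ but also $u_n^{\text{Neumann}}$ and its gradient remain bounded on a fixed collar neighborhood of $\partial D \setminus E$ containing $E$. Concretely, if the Neumann counterexample is built from logarithmic-type singularities of $w_n$ at a boundary point, one places that point at $p$ and uses the explicit Neumann Green's function of the disc to read off the decay of $w_n$ and $\nabla w_n$ away from $p$; the contributions of $\partial_\nu w_n$, $\partial_\tau w_n$, $w_n$ on $E$ are then $O(1)$ (indeed $o(1)$ relative to the blow-up rate). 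A secondary technical point is the well-posedness and uniform estimate for the mixed boundary value problem defining $r_n$: for $\beta \ne 0$ the tangential-derivative term is of the same order as $\partial_\nu$, so one should interpret the condition on $E$ weakly and check that the associated bilinear form is still coercive on the subspace of $H^1(D)$ vanishing on $\partial D\setminus E$, using that $\gamma \ge 0$ and $\alpha > 0$ while the $\beta \partial_\tau$ term integrates to something controlled by trace interpolation. Once these two points are in place, the triangle inequality in step (iii) finishes the proof.
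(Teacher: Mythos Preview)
Your proposal has a genuine and fatal gap: the concentration point is placed on the wrong part of the boundary. You put $p\in\partial D\setminus\overline E$, i.e.\ on the Dirichlet portion, whereas the paper places it in $E$, the Robin portion. This is not a cosmetic choice --- with your placement the Robin solution $u_n$ in fact stays \emph{bounded} in $H^1(D)$, so no counterexample can emerge.

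To see why your step (ii) fails, note that your correction $r_n$ must satisfy $r_n=-w_n$ on $\partial D\setminus E$, and this set contains a neighbourhood of $p$. The Neumann solution $w_n$ converges near $p$ to the Neumann Green function with pole at $p$, whose boundary trace behaves like $\log|x-p|$ and is unbounded in $H^{1/2}$. Hence the Dirichlet datum for $r_n$ blows up, and so does $\|\nabla r_n\|_{L^2}$. Both pieces of your decomposition blow up, and they cancel: writing instead $u_n = u_n^D - h_n$ with $u_n^D$ the Dirichlet solution (bounded in $H^1$ by Wente) and $h_n$ harmonic, one checks that the boundary functional driving $h_n$ is $\psi\mapsto\alpha\int_{\partial D}\partial_\nu u_n^D\,\psi=\alpha\int_D\nabla u_n^D\cdot\nabla\psi-\alpha\int_D f_n\psi$ for $\psi\in Y=\{v\in H^1:v=0\text{ on }\partial D\setminus E\}$. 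Since $\spt(a_n)\cap\partial D\subset\partial D\setminus E$ and $\psi$ vanishes there, integration by parts gives $\int_D f_n\psi=-\int_D a_n\,d\psi\wedge db_n$ with no boundary term, so $|\int_D f_n\psi|\le\|a_n\|_{L^\infty}\|\nabla b_n\|_{L^2}\|\nabla\psi\|_{L^2}\le C\|\nabla\psi\|_{L^2}$. Thus $h_n$ is bounded and so is $u_n$.

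The paper's argument is essentially the mirror image of yours, with both the decomposition and the location of $p$ reversed. It takes $p\in E$ (this is where $\mathcal H^1(E)>0$ is used; the condition $\mathcal H^1(E)<2\pi$ is used only to make $\|\nabla\cdot\|_{L^2}$ a norm on $Y$), writes the Robin solution as $u_n^D-h_n$ with $u_n^D$ the Dirichlet solution, and shows via a continuity lemma for the Robin boundary operator $B:X\to Z^*$ that $\|\nabla h_n\|_{L^2}\to\infty$ because the functional $l_n=\alpha\,\partial_\nu u_n^D$ satisfies $\|l_n\|_{Z^*}\ge\alpha\|f_n\|_{H^{-1}}-\alpha\|\nabla u_n^D\|_{L^2}\to\infty$. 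The point is that with $p\in E$ the test space $Y$ imposes no vanishing near $p$, so the boundary term in the integration by parts above does \emph{not} disappear and $\|f_n\|_{Y^*}$ genuinely blows up.
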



The paper is organized as follows. In section \ref{sec.knownresults} we collect some known results and a-priori estimates. In section \ref{sec.counterexample} we give the proof of Theorem \ref{prop.counterexample} and in section \ref{sec.boundary conditions} its extension to mixed Robin boundary conditions. \\

While finishing this paper the author became aware that a similar example has been found independently by Francesca Da Lio and Francesco Palmurella, see \cite{DaLioPalmurella}.

\section*{Acknowledgment}
First of all the the author thanks Vicent Millot for proposing to consider M\"obius transformation and this way improving the result. 
Furthermore he thanks Guido De Philippis for various useful discussions. 
The author is supported by the MIUR SIR-grant Geometric Variational Problems (RBSI14RVEZ).

\section{some known results}\label{sec.knownresults}
Classical solutions to \eqref{eq:Dirichlet}, \eqref{eq:Neumann} have to be understood in the distributional sense.
\begin{definition}\label{def.weak solution}
	A function $u$ is called a solution of the Dirichlet problem if $u \in W^{1,1}_0(D,\R)$ and
\begin{equation}\label{eq:Dirichlet-distributional} \int_D \nabla u \cdot \nabla \psi - f \psi = 0 \text{ for all } \psi \in C^1_0(D).
\end{equation}
A function  $u$ is called a solution of  the Neumann problem if $u \in W^{1,1}(D,\R)$ and
\begin{equation}\label{eq:Neumann-distributional}
\frac{1}{2\pi} \int_D f \,\int_{\partial D} \psi = \int_{D} \nabla u \cdot \nabla \psi - f \psi \text{ for } \psi \in C^\infty_0(\R^2) \text{ for all } \psi \in C^1(\overline{D}).
\end{equation} 
\end{definition}

The Green function for both problems are explicit. For the Dirichlet problem it is
\begin{equation}\label{eq:Green Dirichlet} G_D(x,y)= \frac{1}{2\pi} \ln(\abs{x-y}) - \frac{1}{2\pi} \ln(\abs{y}\abs{x-y^*}) \text{ with } y^*= \frac{y}{\abs{y}^2} \end{equation}
for  Neumann problem it is 
\begin{equation}\label{eq:Green Neumann}	
G_N(x,y)= \frac{1}{2\pi} \ln(\abs{x-y}) + \frac{1}{2\pi} \ln(\abs{y}\abs{x-y^*}) - \frac14 \abs{x}^2 - \frac14 \abs{y}^2.\end{equation}
Using $G_N$ one has the following representation formula
\[ u(y) - \int_D u = - \int_{\partial D} G_N(x,y) \frac{\partial u}{\partial \nu} + \int_D G(x,y) \Delta u \text{ for } u \in C^2(\overline{D}).\]

In terms of existence and uniqueness one has:

\begin{lemma}\label{lem.uniqueness+a priory bound}
	For every $f\in L^1(D)$ there exists a solutions $u_D/ u_N$ to the Dirichlet/ Neumann problem in the sense of Definition \eqref{def.weak solution}. Furthermore the solutions belong to $W^{1,p}(D,\R)$ for every $p <2$, are unique (up to constant in the Neumann problem) and satisfy the estimate
	\begin{equation}\label{eq:apriory}
	\norm{Du}_{L^p(D)} \le C_p \norm{f}_{L^1(D)}.
	\end{equation}
\end{lemma}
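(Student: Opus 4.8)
The plan is to get existence and the bound \eqref{eq:apriory} from the explicit Green functions \eqref{eq:Green Dirichlet}, \eqref{eq:Green Neumann}, and uniqueness by pairing the homogeneous equation with the solution of the dual problem.

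For $f\in C^\infty(\overline D)$ classical elliptic theory produces a classical solution of \eqref{eq:Dirichlet}, respectively of \eqref{eq:Neumann} (whose boundary datum is chosen precisely so that the solvability condition holds), of the form $u(y)=c+\int_D K(x,y)\,f(x)\,dx$, where $K=G_D$ in the Dirichlet case and $K=G_N$ corrected by terms smooth in $y$ (coming from the term $\int_D f$ in the representation formula of Section \ref{sec.knownresults}) in the Neumann case. Using the identity $\abs{y}\,\abs{x-y^{*}}=\abs{x}\,\abs{y-x^{*}}$ for $x,y\in D$ (both squares equal $\abs{x}^{2}\abs{y}^{2}-2x\cdot y+1$), the ``reflected'' logarithm in \eqref{eq:Green Dirichlet}, \eqref{eq:Green Neumann} equals $\tfrac{\pm 1}{2\pi}\ln\abs{y-x^{*}}$ modulo a $y$-independent term, so differentiating in $y$ gives
\[ \abs{\nabla_y K(x,y)}\;\le\; C\Big(\frac{1}{\abs{x-y}}+\frac{1}{\abs{y-x^{*}}}+1\Big),\qquad x,y\in D, \]
with $x^{*}=x/\abs{x}^{2}\notin \overline D$, while $K$ itself has at most a logarithmic singularity. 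Since $z\mapsto\abs{z}^{-1}$ lies in $L^{p}(B_R)$ for every $p<2$ with norm independent of the centre, and $\abs{y-x^{*}}\ge 1$ whenever $\abs{x^{*}}\ge 2$, we obtain $\sup_{x\in D}\norm{\nabla_y K(\cdot,x)}_{L^{p}(D)}=:C_p<\infty$ for $p<2$ and $\sup_{x\in D}\norm{K(\cdot,x)}_{L^{p}(D)}<\infty$ for all $p<\infty$. Minkowski's integral inequality applied to $u=c+\int_D K(\cdot,x)f(x)\,dx$ then gives $u\in W^{1,p}(D)$ for every $p<2$ together with \eqref{eq:apriory}, the $L^p$-bound on $u$ itself following from Poincar\'e (after normalising $\int_D u=0$ in the Neumann case).

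For general $f\in L^1(D)$ I would approximate $f_k\to f$ in $L^1(D)$ with $f_k\in C^\infty(\overline D)$: by \eqref{eq:apriory} applied to the differences, the corresponding solutions $u_k$ form a Cauchy sequence in $W^{1,p}(D)$, and passing to the limit in \eqref{eq:Dirichlet-distributional} (with closedness of $W^{1,1}_0(D)$ handling the boundary condition), respectively in \eqref{eq:Neumann-distributional} (using also $\int_D f_k\to\int_D f$), exhibits the limit as a solution satisfying \eqref{eq:apriory}. For uniqueness, let $w\in W^{1,1}(D)$ be a difference of two solutions, so $\int_D\nabla w\cdot\nabla\psi=0$ for every $\psi\in C^{1}_0(D)$ (Dirichlet) or every $\psi\in C^{1}(\overline D)$ (Neumann), and $w$ has vanishing trace in the Dirichlet case. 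Given $g\in C^\infty(\overline D)$, with $\int_D g=0$ in the Neumann case, let $\phi_g\in C^{2}(\overline D)$ solve the dual problem $-\Delta\phi_g=g$ in $D$ with $\phi_g=0$ on $\partial D$ (Dirichlet) or $\partial\phi_g/\partial\nu=0$ on $\partial D$ (Neumann); these are classically solvable. Applying Gauss--Green to $w\nabla\phi_g\in W^{1,1}(D,\R^{2})$ gives
\[ \int_D w\,g \;=\; \int_D \nabla w\cdot\nabla\phi_g \;-\; \int_{\partial D} w\,\frac{\partial\phi_g}{\partial\nu}, \]
where the boundary term vanishes since $w$ has zero trace (Dirichlet) or since $\partial\phi_g/\partial\nu=0$ (Neumann). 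In the Neumann case $\phi_g\in C^{1}(\overline D)$ is an admissible test function and the remaining term vanishes directly; in the Dirichlet case I would approximate $\phi_g$ by $\chi_k\phi_g\in C^{1}_0(D)$ with cut-offs $\chi_k$ compactly supported in $D$, using $\abs{\phi_g(x)}\le \mathrm{Lip}(\phi_g)\,\dist(x,\partial D)$ to keep $\abs{\nabla(\chi_k\phi_g)}$ uniformly bounded while $\nabla(\chi_k\phi_g)\to\nabla\phi_g$ a.e., so that dominated convergence against $\nabla w\in L^1(D)$ forces $\int_D\nabla w\cdot\nabla\phi_g=0$. Thus $\int_D w\,g=0$ for all admissible $g$, i.e.\ $w\equiv 0$ (Dirichlet) and $w\equiv\mathrm{const}$ (Neumann), which is the asserted uniqueness.

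The single real difficulty is the low integrability $W^{1,1}$: it rules out the $H^{1}$-level trick of inserting a truncation of $w$ into its own equation, which forces the detour through the dual problem, and it is exactly what is captured --- cleanly --- by $\abs{z}^{-1}\in L^{p}_{\mathrm{loc}}$ for $p<2$ combined with Minkowski's inequality in the a priori bound. I expect the density step for the Dirichlet test function $\phi_g$ to be the most delicate point.
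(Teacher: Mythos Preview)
Your existence argument via the explicit Green functions and Minkowski's inequality is exactly what the paper sketches, and your treatment of the reflected singularity is fine (for $|x^*|<2$ one has $D\subset B_3(x^*)$, so $\int_D|y-x^*|^{-p}\,dy\le C_p$; for $|x^*|\ge 2$ your pointwise bound applies).

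For uniqueness, however, the paper takes a different route. Rather than a duality argument, it extends a homogeneous solution $u$ from $D$ to all of $\R^2$ by Kelvin-type reflection through $\partial D$: set $\hat u(x)=-u(x^*)$ for $x\notin D$ in the Dirichlet case and $\hat v(x)=v(x^*)$ in the Neumann case, where $x^*=x/|x|^2$. A direct change of variables shows that $\hat u$ (resp.\ $\hat v$) is weakly harmonic on $\R^2$, since testing against $\psi\in C^1_c(\R^2)$ reduces to testing $u$ against $\psi(x)\mp\psi(x^*)$ on $D$, which vanishes on $\partial D$ (resp.\ is merely Lipschitz on $\overline D$). Weyl's lemma then gives smoothness across $\partial D$, and the classical maximum principle concludes. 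This exploits the conformal symmetry of the disc and avoids solving an auxiliary PDE, at the price of being domain-specific; your duality argument is more portable to general smooth domains. Note that the paper's reflection step also hides a density issue---one must allow Lipschitz test functions in \eqref{eq:Dirichlet-distributional}, \eqref{eq:Neumann-distributional}---which is the exact analogue of the cut-off approximation you correctly flag as the delicate point in your approach.
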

\begin{proof}
There are several proofs in the literature treating the case of uniquness and a-priori estimates, compare for instance \cite{LittmanStampacchiaWeinberger}, \cite[Appendix A]{AnconaBrezis}. In our case existence and the a priory estimate \eqref{eq:apriory} can be obtained by using the Green function $G_D, G_N$. Uniqueness for the Dirichlet problem can be obtained by anti symmetric reflection: Let $u$ be a distributional solution of \eqref{eq:Dirichlet-distributional} with $f=0$. One checks that 
\[ \hat{u}(x):=\begin{cases} u(x) & \text{ for } x \in D\\  -u(x^*) & \text{ for } x \notin D \text{ with } x^*= \frac{x}{\abs{x}^2} \end{cases} \]
solves
\[ \int_{\R^2} \nabla \hat{u} \cdot \nabla \psi = \int_D \nabla u . \nabla (\psi(x) - \psi(x^*)) \text{ for all } \psi \in C^1_c(\R^2). \]
But since $\psi(x)-\psi(x^*) \in C^{0,1}_0(D)$ we deduce that $\hat{u}$ is harmonic and therefore smooth in $\R^2$. Now the maximum principle applies since $u$ takes the boundary values in the strong sense.\\
Similarly we deduce in uniqueness in the Neumann problem using the symmetric reflection: Let $v$ be a distributional solution of \eqref{eq:Neumann-distributional} with $f=0$. As before one checks
\[ \hat{v}(x):=\begin{cases} v(x) & \text{ for } x \in D \\ v(x^*) & \text{ for } x \notin D \end{cases} \]
solves
\[ \int_{\R^2} \nabla \hat{v} \cdot \nabla \psi = \int_D \nabla v . \nabla (\psi(x) + \psi(x^*)) \text{ for all } \psi \in C^1_c(\R^2). \]
But since $\psi(x)+ \psi(x^*) \in C^{0,1}(\overline{D})$ we deduce that $\hat{v}$ is harmonic and therefore smooth in $\R^2$. Now the maximum principle implies that $v=const.$ .
\end{proof}


\section{Proof of Theorem \ref{prop.counterexample}}\label{sec.counterexample}
In the following we will always identify $\R^2$ with the complex plane $\C$ i.e. $i=e_2$. 
\begin{proof}[Proof of Theorem \ref{prop.counterexample}]
The main step of the proof consists in the following claim:\\
For every $r_0>0$ there exists a sequence $a_n, b_n \in C^\infty(\overline{D})$ with the properties that
\begin{align}\label{eq:blowup-sequence}
&\spt(a_n) \cup \spt(b_n) \subset B_{r_0}(-e_2) \\
	\nonumber& a_n, b_n \rightharpoonup 0 \text{ in } H^1(D)\\\nonumber &	\norm{a_n}_{L^\infty(D)}+\norm{\nabla a_n}_{L^{2,1}(D)}, \norm{b_n}_{L^\infty(D)}+\norm{\nabla b_n}_{L^{2,1}(D)} \le C\\
 &	\norm{ da_n \wedge db_n }_{H^{-1}(D)} \to \infty \text{ as } n \to \infty \label{eq:blowup-sequence-H^-1}. 
\end{align}

Given such a sequence we can conclude the Theorem. Let $u_n$ is the unique solutions to the Dirichlet problem \eqref{eq:Dirichlet} with right hand side $f_n= da_n \wedge db_n$ and $h_n$ is the unique harmonic function satisfying 
\[ \frac{ \partial h_n}{\partial \nu} = \frac{\partial u_n}{\partial \nu} - \frac{1}{2\pi} \int_{\partial D} \frac{\partial u_n}{\partial \nu} \text{ on } \partial D.\]
Such a harmonic function exists since $\int_{\partial D} \left(\frac{\partial u_n}{\partial \nu} - \frac{1}{2\pi} \int_{\partial D} \frac{\partial u_n}{\partial \nu}\right) =0$. 
It is straight forward to check that \[ v_n:=u_n - h_n \]
is the unique solution to the Neumann problem \eqref{eq:Neumann}. Observe that $v_n$ is a Cauchy sequence in $W^{1,p}(D)$ for all $p<2$ converging to the $v \in W^{1,p}(D)$ the unique solution of \ref{eq:Neumann} with $f= da \wedge db$.  By Wente's theorem we have
\[ \norm{\nabla v_n}_{L^2(D)} \ge \norm{\nabla h_n}_{L^2(D)} - \norm{\nabla u_n}_{L^2(D)} \ge \norm{\nabla h_n}_{L^2(D)} - C\norm{\nabla a_n}_{L^2(D)} \norm{\nabla b_n}_{L^2(D)}.\]
The Theorem follows by showing that \begin{equation}\label{eq:norm of harmonics}\norm{\nabla h_n}_{L^2(D)} \to \infty.\end{equation}

To do so we will use the Dirichlet to Neumann map in the following formulation: Let
\begin{align*}
X_0&:= \{ h \in H^1(D) \colon \Delta h =0 \text{ in } D \text{ and } \fint_D h = 0 \},\\ Y_0&:= \{ u \in H^1(D) \colon \fint_D u = 0 \}.\end{align*}  Endowed with the the $L^2$ inner product $\langle u, v\rangle = \int_D \nabla u \cdot \nabla v$ we obtain Hilbert spaces satisfying $X_0 \subset Y_0$. If we set $Z_0^*:=\{ l \in Y_0^* \colon l(\psi) = 0 \forall \psi \in H^1_0(D)\cap Y_0 \}$ then classical results concerning Dirichlet to Neumann operators imply that the operator 
\[ A: X_0 \to Z_0^* \text{ with } Ah:= \frac{\partial h}{\partial \nu} \]
is a continuous and onto i.e. it has a continuous inverse $A^{-1}$.

Next we identify $\frac{\partial u_n}{\partial \nu} - \frac{1}{2\pi} \int_{\partial D} \frac{\partial u_n}{\partial \nu}$ with an linear functional $l_n \in Y_0^*$ i.e. 
\[  l_n( \psi ):=\int_{\partial D} \left(\frac{\partial u_n}{\partial \nu} - \frac{1}{2\pi} \int_{\partial D} \frac{\partial u_n}{\partial \nu}\right)\, \psi .\]
We will show that they are elements of  $Z_0^*$ with the property that $\norm{l_n}_{H^{-1}(D)} \to + \infty. $
The normal derivative of a solution $u\in W^{1,1}(D)$ to the Dirichlet problem \eqref{eq:Dirichlet}, with $f \in L^1(D)$ is given in the sense of distributions by
\begin{equation}\label{eq:normal derivative}
\int_{\partial D} \frac{\partial u}{\partial \nu} \psi := \int_{D} \nabla u \cdot \nabla \psi - f \psi \text{ for } \psi \in C^1(\overline{D}). 
\end{equation}
The distribution is supported on $\partial D$ since given $\psi_1, \psi_2 \in C^\infty(\overline{D})$ with $\psi_1=\psi_2$ on $\partial D$ we have $\varphi=\psi_1-\psi_2 \in C^1_0(\overline{D})$ with $\varphi = 0$ on $\partial D$ and so by \eqref{eq:Dirichlet-distributional} we have 
\[ \int_{\partial D} \frac{\partial u}{\partial \nu} \varphi =\int_{D} \nabla u \cdot \nabla \varphi - f \varphi =0.\]
By density of $C^\infty_c(D)$ in $H^1_0(D)$ we conclude $l_n(\psi)=0$ for all $\psi \in H^1_0(D)$. 
Furthermore it is straight forward to check that $l_n$ vanishes on the constant functions hence $l_n$ is a well-defined element of $Y_0^*$, since $l_n(\psi) = l_n(\psi - \fint \psi ) $. Hence we conclude that $l_n \in Z_0^*$ for all $n$. The first part of \eqref{eq:normal derivative} and the second part in the definition of $l_n$ are uniformly bounded by Wente's Theorem \ref{thm.wente} because
\begin{align*} \int_{D} \nabla u_n \cdot \nabla \psi &\le \norm{\nabla u_n}_{L^2(D)} \norm{\nabla \psi}_{L^2(D)}\\
\abs{\frac{1}{2\pi} \int_{\partial D}\frac{\partial u_n}{\partial \nu}} & = \abs{ \frac{1}{2\pi} \int_D f_n} \le \frac{1}{2\pi} \norm{\nabla a_n}_{L^2(D)}\norm{\nabla b_n}_{L^2(D)}.\end{align*}  Hence $\norm{l_n}_{H^{-1}(D)} \to \infty$ by \eqref{eq:blowup-sequence-H^-1}. Since $h_n=A^{-1}(l_n)$ and $A^{-1}$ is continuous we conclude \eqref{eq:norm of harmonics}.\\

It remains to construct the sequence $a_n, b_n$ with the properties \eqref{eq:blowup-sequence}, \eqref{eq:blowup-sequence-H^-1}. Performing a translation we can consider the translated disc $D':=D+i$ i.e. $D' \subset H:=\C \cap \{ y \ge 0 \}=\{ r e^{i\theta} \colon 0<\theta < \pi \}$. Furthermore one readily checks that if $\Re(h),\Im(h)$ are real and imaginary part of a holomorphic function $h$ then we have point wise 
\begin{equation}\label{eq:holomorphic function wedge Dirichlet}
d\Re(h)\wedge d\Im(h) = \abs{h'(z)}^2 dx \wedge dy \text{ and } \abs{d\Re(h)}^2= \abs{d\Im(h)}^2= \abs{h'(z)}^2. 
\end{equation}
We will construct our contradicting sequence $a_n, b_n$ as the real and imaginary part of an sequence of holomorphic function $h_n$ on $H$ multiplied by an truncation function $\varphi$.

Indeed consider the family möbius transforms of the complex plane $\C$
\[ m_\epsilon(z):= \frac{z-i \epsilon}{z+ i\epsilon}. \]
We observe that $m_\epsilon$ maps the upper half-space $H$ onto the the disc $D$ for every $\epsilon>0$. 
Furthermore one readily calculates
\begin{equation}\label{eq:derivative and inverse} m'_\epsilon(z) = \frac{2i\epsilon}{(z+i\epsilon)^2} \quad m^{-1}_\epsilon(z)= i\epsilon \frac{z+1}{1-z}. \end{equation}
We note that for every $\delta>0$ one has $m'_\epsilon(z) \to 0$ and $m_\epsilon(z) \to 1$ uniformly on $\C \setminus D_\delta$ for $\epsilon \to 0$. Furthermore $m_\epsilon^{-1} (z) \to 0$ uniformly on $\C \setminus D_\delta(1)$. 
Thus we can conclude that $l_\epsilon:=\abs{m_\epsilon'(z)}^2 dx \wedge dy \to \pi \delta_0$ in the sense of distributions, i.e. given $\psi \in C^0_c(\C)$ arbitrary one has
\begin{align*}
\int_{H} \psi(z) \abs{m_\epsilon'(z)}^2 dx \wedge dy = \int_D \psi \circ m_\epsilon^{-1}(z) \,dx \wedge dy  \to \psi(0) \pi. 
\end{align*}
Furthermore we conclude that if $\varphi$ is any cut of function with $\varphi=1$ in a neighborhood of $0$ we still have $l_\epsilon\lfloor \varphi \to \pi \delta$. Since $\pi \delta_0 \notin H^{-1}(H)$ we conclude that $ \norm{ l_\epsilon\lfloor \varphi}_{H^{-1}(D)} \to \infty$ as $\epsilon \to 0$. 
Fixing a sequence $\epsilon_n \to 0$  then 
\[ a_n:= \varphi \,\Re(m_{\epsilon_n}-1) \text{ and } b_n:= \varphi\, \Im(m_{\epsilon_n}-1) \]
satisfy $ a_n, b_n \in C^\infty(H)$ and $a_n, b_n \to 0$ uniformly in $C^1$ on $\overline{H} \setminus D_\delta$ for any $\delta>0$. Hence for an appropriate choice of $\varphi$ the first two parts of \eqref{eq:blowup-sequence} follows once the uniform bound of the $L^{2,1}$ norm is shown. \\
We calculate \[ da_n \wedge db_n = l_\epsilon \lfloor \varphi^2+ \varphi d\varphi\wedge\left(\Re(m_{\epsilon_n}) d\Im(m_{\epsilon_n}) - \Im(m_{\epsilon_n})d\Re(m_{\epsilon_n})\right) = l_\epsilon \lfloor \varphi^2  + \varphi d \varphi \wedge w_\epsilon. \]
Since $\spt(d \varphi)) \subset \C \setminus D_\delta$ for some $\delta>0$ and $\abs{w_\epsilon} \to 0 $ uniformly on $\C\setminus D_\delta$ we conclude that $\norm{\varphi d \varphi \wedge w_\epsilon}_{H^{-1}} \to 0 $ as $n \to \infty$. Hence $da_n \wedge db_n \to \pi \delta_0$ in the sense of distributions and therefore $\norm{da_n \wedge db_n}_{H^{-1}(H)} \to \infty$ as $n \to \infty$, i.e. \eqref{eq:blowup-sequence-H^-1} holds. 

It remains to show that $\abs{da_n},\abs{db_n}$ are uniformly bounded in $L^{2,1}$. By \eqref{eq:derivative and inverse} we have 
\[ \{ z \in H \colon \abs{m'_\epsilon(z)} \ge t \} = B_{r(t)}(-i\epsilon)\cap H \text{ with } \frac{2\epsilon}{r(t)^2} = t \]
and $\abs{m'_\epsilon}(z) \le \frac{2}{\epsilon} $ for all $z \in H$.
Hence we may estimate \[\mu(t):=\abs{ \{ z \in H \colon \abs{m'_\epsilon(z)} \ge t \}} \le \pi r(t)^2 = \frac{2\epsilon}{t} \pi. \]
Recall that the $L^{2,1}$-norm can be written as 
\[ \norm{f}_{L^{2,1}(H)} = 2 \int_0^\infty  \mu_f(t)^\frac12 \,dt;\]
here $\mu_f(t)= \abs{ \{ z\in H \colon \abs{f(z)} > t \}}$ is the distribution function, compare \cite[Proposition 1.4.9]{Grafakos}.
Using the estimates above we obtain
\begin{align*}
\norm{ \abs{m'_\epsilon}}_{L^{2,1}(H)} \le 2 \sqrt{ 2\pi \epsilon} \int_{0}^{ \frac{2}{\epsilon}} \frac{1}{\sqrt{t}} \, dt \le 8 \sqrt{\pi}.
\end{align*}
Which is uniformly bounded in $\epsilon$ and proofing the last part of \eqref{eq:blowup-sequence}.

\end{proof}

\begin{remark}\label{rem.not in L^infty}
	Observe that if the solution to the Neumann problem is not in $H^1(D)$ then it can neither be in $L^\infty$ nor in $W^{2,1}(D)$. Indeed $u\in W^{2,1}(D)$ would imply $u\in L^\infty$ since $W^{2,1}(D)$ embeds in $L^\infty$ in two dimensions, see for instance Theorem 3.3.10 combined with Theorem 3.3.4 in \cite{Helein}. If $u$ would be in $L^\infty(D)$ then we can take $u_\epsilon \in C^\infty(\overline{D})$ with $u_\epsilon \to u$ in $W^{1,1}(D)$ and uniformly bounded in $L^\infty(D)$. Testing \eqref{eq:Neumann-distributional} with $u_\epsilon$ gives
	\[ \int_D \nabla u \cdot \nabla u_\epsilon = \int_D f u_\epsilon + \frac{1}{2\pi} \int_D f \int_{\partial D} u_\epsilon \le 2 \norm{f}_{L^1} \norm{u_\epsilon}_{L^\infty}. \]
	The right hand side is bounded independent of $\epsilon$ so we conclude that $u \in H^1(D)$ a contradiction.  
\end{remark}

By using more or less an abstract functional analytic arguments we are able to obtain the following Corollary. Its proof is presented in the appendix.

\begin{corollary}\label{cor.single element for neumann}
There exists $a,b \in H^1(D)$ with the additional properties $a,b \in L^\infty(D)$ and $da, db \in L^{2,1}(D)$ such that if $u \in W^{1,1}(D)$ denotes the solution to the Neumann problem \eqref{eq:Neumann} with $f=da \wedge db$ then $u \notin H^1(D)$. 
\end{corollary}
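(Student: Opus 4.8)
The plan is to obtain the single pair $(a,b)$ as a suitable infinite combination of the building blocks $a_n,b_n$ already constructed in the proof of Theorem~\ref{prop.counterexample}, using a Baire‑category / uniform‑boundedness argument rather than an explicit formula. Concretely, recall that in that proof we produced, for every $r_0>0$, a sequence $a_n,b_n$ supported in a small ball, bounded in $L^\infty$ and with $\nabla a_n,\nabla b_n$ bounded in $L^{2,1}$, such that $\norm{da_n\wedge db_n}_{H^{-1}(D)}\to\infty$; equivalently, writing $T$ for the (closed, densely defined) linear map that sends the right‑hand side $f$ to $\nabla u$, where $u$ solves the Neumann problem \eqref{eq:Neumann}, the map $T$ is \emph{not} bounded from $H^{-1}(D)$ (or from the subspace of Jacobians $da\wedge db$ with the natural norm) into $L^2(D)$.

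First I would set up the right Banach space of admissible pairs: let
\[
\mathcal{X}:=\{(a,b): a,b\in H^1(D)\cap L^\infty(D),\ da,db\in L^{2,1}(D)\},\qquad
\norm{(a,b)}_{\mathcal{X}}:=\norm{a}_{L^\infty}+\norm{b}_{L^\infty}+\norm{da}_{L^{2,1}}+\norm{db}_{L^{2,1}},
\]
which is a Banach space, and consider the subset
\[
\mathcal{G}:=\{(a,b)\in\mathcal{X}: \text{the Neumann solution }u\text{ with }f=da\wedge db\text{ satisfies }u\in H^1(D)\}.
\]
The goal is to show $\mathcal{G}\neq\mathcal{X}$. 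The key step is to prove that $\mathcal{G}$ is \emph{meagre} in $\mathcal{X}$: write $\mathcal{G}=\bigcup_{k\in\N}\mathcal{G}_k$ where $\mathcal{G}_k=\{(a,b)\in\mathcal{X}:\norm{\nabla u}_{L^2(D)}\le k\}$, and check that each $\mathcal{G}_k$ is closed in $\mathcal{X}$. Closedness follows because if $(a_j,b_j)\to(a,b)$ in $\mathcal{X}$ then $da_j\wedge db_j\to da\wedge db$ in $H^{-1}(D)$ (using the product estimate: $\norm{da_j\wedge db_j-da\wedge db}_{H^{-1}}\lesssim \norm{a_j-a}_{L^\infty}\norm{db_j}_{L^2}+\norm{a}_{L^\infty}\norm{d(b_j-b)}_{L^2}$ after an integration by parts, so the Jacobians converge as distributions), hence the Neumann solutions $u_j$ converge to $u$ in $W^{1,p}$, $p<2$, by Lemma~\ref{lem.uniqueness+a priory bound}; and if additionally $\norm{\nabla u_j}_{L^2}\le k$ then $\nabla u_j\rightharpoonup\nabla u$ weakly in $L^2$ and $\norm{\nabla u}_{L^2}\le\liminf\norm{\nabla u_j}_{L^2}\le k$. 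If every $\mathcal{G}_k$ has empty interior, Baire's theorem gives $\mathcal{G}\neq\mathcal{X}$ and we are done, producing $(a,b)\in\mathcal{X}\setminus\mathcal{G}$.

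So the remaining — and main — obstacle is to show $\mathcal{G}_k$ has empty interior, i.e.\ that near any admissible pair there are admissible pairs with arbitrarily large $\norm{\nabla u}_{L^2}$. Here is where the construction from Theorem~\ref{prop.counterexample} enters: given $(a_0,b_0)\in\mathcal{X}$ and $\rho>0$, pick a point $p_0\in D$ at which we can fit a small ball $B_{r_0}(p_0)\subset D$ on which $b_0$ differs from an affine function only by a $C^1$‑small amount (possible since $b_0\in H^1$, though more simply one just exploits that $\nabla b_0$ is approximately constant on a small enough ball for a.e.\ $p_0$, or alternatively uses that linear functions are dense enough); then on that ball add to $(a_0,b_0)$ a rescaled, small‑amplitude copy $\lambda(\tilde a_n,\tilde b_n)$ of the Wente building blocks, translated into $B_{r_0}(p_0)$ and multiplied by the cutoff, with $\lambda$ chosen so small that the $\mathcal{X}$‑norm of the perturbation is $<\rho$. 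The point is that $\norm{d(\lambda\tilde a_n)\wedge d(\lambda\tilde b_n)}_{H^{-1}}=\lambda^2\norm{d\tilde a_n\wedge d\tilde b_n}_{H^{-1}}$ can still be made as large as we wish by taking $n$ large \emph{after} fixing $\lambda$, because the building blocks have \emph{fixed} $\mathcal{X}$‑norm but \emph{unbounded} $H^{-1}$‑norm of their Jacobian; and the cross terms $d(\lambda\tilde a_n)\wedge db_0+da_0\wedge d(\lambda\tilde b_n)$ contribute an $H^{-1}$ norm that is $O(\lambda)$ uniformly in $n$ (since $a_0,b_0$ are fixed and the building blocks are $L^\infty$ and $H^1$‑bounded), hence negligible compared to the $\lambda^2\cdot(\text{large})$ main term. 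Therefore the Jacobian of the perturbed pair has large $H^{-1}$ norm and lies in the range direction on which $T$ is unbounded, so by the same Dirichlet‑to‑Neumann argument as in Theorem~\ref{prop.counterexample} (the functional $l$ on $Z_0^*$ has large $H^{-1}$ norm, and $A^{-1}$ is a bounded isomorphism, so $\norm{\nabla h}_{L^2}$, and hence $\norm{\nabla u}_{L^2}$, blows up) we get $\norm{\nabla u}_{L^2}>k$ for the perturbed pair while it stays within $\rho$ of $(a_0,b_0)$. This shows $\mathcal{G}_k$ has empty interior, completing the proof. The one technical care needed is to make sure the translated–rescaled building blocks genuinely lie in $\mathcal{X}$ with controlled norm: rescaling $z\mapsto z/r$ leaves $\norm{\nabla\cdot}_{L^{2,1}}$ invariant in two dimensions and only shrinks $L^\infty$, so this is automatic, and the cutoff multiplication is handled exactly as in the proof of Theorem~\ref{prop.counterexample}.
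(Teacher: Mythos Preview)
Your Baire-category approach is sound and genuinely different from the paper's proof. The paper argues constructively: it runs a Gram--Schmidt--type orthogonalisation in the image Hilbert space $H^1(D)$, producing sequences $(h_i,k_i)$ with $\langle Ay_i,Ay_j\rangle=0$, $\norm{Ay_i}_{H^1}\ge 2^{3i}$, and $dh_i\wedge dk_i=y_i+R_i$ with $R_i$ small in $L^2$; then it sets $a=\sum 2^{-i}h_i$, $b=\sum 2^{-i}k_i$ and verifies directly that $A(da\wedge db)\notin H^1$ by testing against $f_n=\sum_{i\le n}2^{-i}Ay_i/\norm{Ay_i}$. Your argument is non-constructive but cleaner: once one knows each $\mathcal{G}_k$ is closed with empty interior, Baire gives a residual set of bad pairs. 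The paper's route yields an explicit $(a,b)$ and simultaneously treats the Robin case; yours is shorter and shows in fact that the bad set is generic.

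Two points deserve tightening. First, your choice of an \emph{interior} point $p_0$ is not cosmetic but essential. For the cross terms you want
\[
\Bigl|\int_D (da_0\wedge d\tilde b_n)\,\psi\Bigr|
= \Bigl|\int_D \tilde b_n\, J\nabla a_0\cdot\nabla\psi\Bigr|
\le \norm{\tilde b_n}_{L^\infty}\norm{\nabla a_0}_{L^2}\norm{\nabla\psi}_{L^2},
\]
which requires the boundary term $\int_{\partial D}\tilde b_n(J\nabla a_0\cdot\nu)\psi$ to vanish; this holds precisely because $\tilde a_n,\tilde b_n$ are compactly supported in $D$. If you used the boundary-concentrating blocks of Theorem~\ref{prop.counterexample} unmodified, that boundary term is present and $\norm{\partial_\tau\tilde b_n}_{L^2(\partial D)}\sim\epsilon_n^{-1/2}$ blows up, so the cross terms are \emph{not} uniformly bounded in the relevant dual norm. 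You should therefore say explicitly how you obtain interior-supported blocks with the same properties---for instance by evenly reflecting the half-plane blocks across $\partial H$ and then translating, which preserves the $L^{2,1}$ bound and makes the Jacobian concentrate to $2\pi\delta_{p_0}$ with $p_0$ interior (so that $\delta_{p_0}\notin (H^1_0(D))^*$ and the main term does blow up). Second, the digression about approximating $b_0$ by an affine function near $p_0$ plays no role and should be dropped; the cross-term bound above uses only $a_0\in L^\infty$, $\nabla a_0\in L^2$ and $\tilde b_n\in L^\infty\cap H^1_0$, nothing about local smoothness of $b_0$.
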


\section{More general boundary conditions}\label{sec.boundary conditions}
Our construction of the counterexample relies mainly on the continuity of the Dirichlet to Neumann map $D_0$. The extension to more general boundary conditions of Robin type follows finding a replacement of the Dirichlet to Neumann map. The replacement is constructed as follows:


\begin{align*}
X&:= \{ h \in H^1(D) \colon \Delta h =0 \text{ in } D \text{ and } h = 0 \text{ on } \partial D \setminus E \}\\
Y&:= \{ u \in H^1(D) \colon u =0 \text{ on }  \partial D \setminus E \}
\end{align*}
Since by assumption $\mathcal{H}^1(\partial D \setminus E)>0$ we can endow $X,Y$ with the norm $\norm{u}=\norm{\nabla u}_{L^2(D)}$. Finally we define the closed subset $Z^* \subset Y^*$ by 
\[  Z^*:= \{ l \in Y^* \colon l(u) = 0 \text{ for all } u \in H^1_0(D) \}. \]
Obviously one has the inclusion $X \subset Y$ and $Z^* \subset Y^*$. 
\begin{lemma}\label{lem.boundaryoperator}
The operator $B: X \to Z^*$ defined by
\begin{align*}
 \langle B h, \psi \rangle  &= \int_{\partial D} \left(\alpha \frac{\partial h}{\partial \nu} +\beta \frac{\partial h}{\partial \tau} + \gamma h \right) \psi \\
&:= \alpha \int_{D} \nabla h \cdot \nabla \psi +\beta  \int_{\partial D} \frac{\partial h}{\partial \tau}\psi + \gamma \int_{\partial D } h \psi
\end{align*}
is continuous, onto with continuous inverse $B^{-1}: Z^* \to X$. 
\end{lemma}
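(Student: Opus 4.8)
The plan is to mimic the functional-analytic structure used for the Dirichlet-to-Neumann map $A$ in Section \ref{sec.counterexample}: show that $B$ is well-defined and bounded, that it is injective, and that its range is closed and dense in $Z^*$, from which continuity of $B^{-1}$ follows by the open mapping theorem. First I would check that $B$ is well-defined as a map into $Z^*$. For $h\in X$ harmonic, the bilinear form $\psi\mapsto \alpha\int_D\nabla h\cdot\nabla\psi$ is bounded on $Y$ by $\alpha\|\nabla h\|_{L^2}\|\nabla\psi\|_{L^2}$; the boundary terms $\beta\int_{\partial D}\frac{\partial h}{\partial\tau}\psi$ and $\gamma\int_{\partial D}h\psi$ are bounded using the trace theorem together with elliptic regularity of harmonic functions (a harmonic $h\in H^1(D)$ vanishing on the arc $\partial D\setminus E$ has $\frac{\partial h}{\partial\tau}\in L^2(\partial D)$, or one integrates by parts to rewrite $\int_{\partial D}\frac{\partial h}{\partial\tau}\psi=-\int_{\partial D} h\frac{\partial\psi}{\partial\tau}$ when $\psi$ is smooth, then uses density and the Poincaré inequality on $Y$). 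Since for $\psi\in H^1_0(D)$ one has $\int_D\nabla h\cdot\nabla\psi=0$ and the boundary terms vanish, $Bh\in Z^*$.

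Next I would establish injectivity. Suppose $Bh=0$ for some $h\in X$. Testing against $\psi=h\in Y$ gives $\alpha\|\nabla h\|_{L^2(D)}^2+\beta\int_{\partial D}\frac{\partial h}{\partial\tau}h+\gamma\int_{\partial D}h^2=0$. The middle term is $\tfrac{\beta}{2}\int_{\partial D}\frac{\partial}{\partial\tau}(h^2)=0$ since $\partial D$ is closed (no boundary), and $\gamma\int_{\partial D}h^2\ge 0$ because $\gamma\ge 0$. Hence $\alpha\|\nabla h\|_{L^2(D)}^2\le 0$, and since $\alpha>0$ and $\|\nabla\cdot\|_{L^2}$ is a genuine norm on $X$ (here we use $\mathcal{H}^1(\partial D\setminus E)>0$ so the Poincaré–type inequality holds), we get $h=0$. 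The same computation shows coercivity in a weak sense: $\langle Bh,h\rangle\ge \alpha\|h\|_X^2$, which is already enough to conclude that $B$ is bounded below, $\|Bh\|_{Z^*}\ge\alpha\|h\|_X$, hence has closed range and a bounded inverse on its range.

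It remains to show $B$ is onto $Z^*$, and this is the main obstacle. Given $l\in Z^*$, I would solve the variational problem: find $h\in X$ with $\alpha\int_D\nabla h\cdot\nabla\psi+\beta\int_{\partial D}\frac{\partial h}{\partial\tau}\psi+\gamma\int_{\partial D}h\psi=l(\psi)$ for all $\psi\in Y$. The bilinear form $B(h,\psi)$ on $X\times X$ is bounded (by the first step) and coercive on $X$ (by the second step, using that the $\beta$-term is antisymmetric and vanishes on the diagonal, and the $\gamma$-term is nonnegative), so Lax–Milgram produces a unique $h\in X$ solving $B(h,\psi)=l(\psi)$ for all $\psi\in X$. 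The delicate point is upgrading this to all $\psi\in Y$: one writes a general $\psi\in Y$ as $\psi=\psi_X+\psi_0$ with $\psi_X\in X$ its harmonic part and $\psi_0\in H^1_0(D)\cap Y$; since $h$ is harmonic one checks $B(h,\psi_0)=\alpha\int_D\nabla h\cdot\nabla\psi_0+(\text{boundary terms that vanish since }\psi_0=0\text{ on }\partial D)=0$, and $l(\psi_0)=0$ because $l\in Z^*$, so $B(h,\psi)=B(h,\psi_X)=l(\psi_X)=l(\psi)$. This shows $Bh=l$, completing surjectivity; continuity of $B^{-1}$ is then immediate from the lower bound $\|Bh\|_{Z^*}\ge\alpha\|h\|_X$.
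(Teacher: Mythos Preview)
Your argument is correct and in fact slightly more self-contained than the paper's. Both proofs share the same two ingredients: (i) the boundedness of $B$ via the trace theorem together with the $H^{-1/2}$ control of $\partial h/\partial\tau$ for harmonic $h$, and (ii) the diagonal identity $\langle Bh,h\rangle=\alpha\|\nabla h\|_{L^2}^2+\gamma\int_{\partial D}h^2\ge\alpha\|h\|_X^2$, which uses that the $\beta$-term is antisymmetric on the closed curve $\partial D$. Where the two proofs diverge is in the surjectivity step. The paper embeds $B$ into the one-parameter family $B_s$ obtained by replacing $(\beta,\gamma)$ with $(s\beta,s\gamma)$, observes that the lower bound $\langle B_sh,h\rangle\ge\alpha\|h\|_X^2$ is uniform in $s$, and then invokes the method of continuity (\cite[Theorem 5.2]{GilbargTrudinger}) to reduce to $B_0=\alpha\,\partial/\partial\nu$, whose surjectivity is the classical Dirichlet-to-Neumann statement taken as known. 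You instead apply Lax--Milgram directly on the Hilbert space $X$ to produce $h$, and then use the orthogonal-type splitting $\psi=\psi_X+\psi_0$ with $\psi_X\in X$ harmonic and $\psi_0\in H^1_0(D)$ to upgrade the identity $\langle Bh,\psi\rangle=l(\psi)$ from $\psi\in X$ to all $\psi\in Y$; this works because both sides vanish on $H^1_0(D)$ (the left by harmonicity of $h$, the right by $l\in Z^*$). Your route avoids both the continuity method and the appeal to the Dirichlet-to-Neumann map as a black box, at the modest cost of spelling out the $X\oplus H^1_0$ decomposition of $Y$; the paper's route, on the other hand, makes transparent that the Robin operator is a bounded perturbation of the Neumann one.
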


\begin{proof}
Instead of $B$ itself we consider the family of operators $B_s: X \to Z^*$ for $s \in [0,1]$. $B_s$ is defined as $B$ with $s\beta, s\gamma$ replacing $\beta, \gamma$. Since $h$ is harmonic in $D$ we have $\langle B_sh, \psi \rangle =0$ for all $\psi \in H^1_0(D)$ by density of $C^\infty_c(D)$ in $H^1_0(D)$. Furthermore we have the estimate
\begin{align*}
\langle B_sh, \psi \rangle &\le \alpha \norm{\nabla h}_{L^2(D)} + \abs{s\beta} \norm{\frac{\partial h}{\partial \tau}}_{H^{-\frac12}{\partial D}} \norm{\psi}_{H^{\frac12}{\partial D}} + s\gamma \norm{h}_{L^2(\partial D)} \norm{\psi}_{L^2(\partial D)}\\
&\le ( \alpha + C \abs{\beta}  + C \gamma) \norm{ \nabla h }_{L^2(D)} \norm{\nabla \psi}_{L^2(D)}. 
\end{align*} 
In the last line we used that for harmonic functions we have \[ \norm{\frac{\partial h}{\partial \tau}}_{H^{-\frac12}(\partial D)} = \norm{\frac{\partial h}{\partial \nu}}_{H^{-\frac12}(\partial D)} = \norm{ \nabla h }_{L^2(D)}\] and the trace theorem for Sobolev functions. \\
This shows that $B_s$ is a family of uniformly bounded operators taking values in $Z^*$. Since $X \subset Y$ we have the lower bound 
\begin{align*}
 \langle B_s h, h \rangle  &= \alpha \int_{D} \nabla h \cdot \nabla h +s\beta \frac12  \int_{\partial D} \frac{\partial h^2}{\partial \tau} + s\gamma \int_{\partial D } h^2\\
 &= \alpha \int_{D} \nabla h \cdot \nabla h + s\gamma \int_{\partial D } h^2 \ge \alpha \norm{ \nabla h }_{L^2(D)}^2. 
\end{align*}
Finally since $B_s = (1-s) B_0 + s B$ the \emph{ method of continuity }, e.g. \cite[Theorem 5.2]{GilbargTrudinger} applies and $B=B_1$ is onto if and only if $B_0$ is onto. By construction we have $B_0h = \alpha \frac{\partial h}{\partial \nu}$ the classical normal derivative on $E$ which is known to be onto by the Dirichlet to Neumann map. This concludes the lemma.

\end{proof}
Now we are able to complete the proof of the Theorem. 
\begin{proof}[Proof of Theorem \ref{prop:Robinboundary}]
	The construction is now essentially the same as in the proof of Theorem \ref{prop.counterexample}. After a rotation we may assume that $-i= - e_2 \in E$. Fix $r_0>0$ such that 
$\partial D \cap B_{r_0}(-i) \subset E$. Let $a_n, b_n, u_n \in C^\infty(\overline{D})$ be the sequence constructed in the proof of Theorem \ref{prop.counterexample}. By the choice of $r_0>0$ we have ensured that
	\[ \spt(a_n) \cup \spt(b_n) \subset B_{r_0}(-i). \]
	Observe that $l_n:= \alpha \frac{\partial u_n}{\partial \nu} + \beta \frac{\partial u_n}{\partial \tau} + \gamma u_n \in Z^*$ because
	\[ \langle Bu_n , \psi \rangle = \alpha \int_{\partial D} \frac{\partial u_n}{\partial\nu} \psi = \alpha \int_D \nabla u_n \cdot \nabla \psi - \alpha \int_D da_n \wedge db_n \psi \]
	and the discussion below \eqref{eq:normal derivative} applies. 
	Furthermore we have
	\[ \norm{l_n}_{Z^*} \ge \alpha \norm{ da_n \wedge db_n }_{H^{-1}(D)} - \alpha \norm{ \nabla u_n}_{L^2(D)}. \]
	By Wente's theorem \ref{thm.wente} $\norm{ \nabla u_n}_{L^2(D)}$ is uniformly bounded and so the applications of lemma \ref{lem.boundaryoperator} gives for $h_n:= B^{-1}(l_n)$ that 
	\[ \norm{ \nabla h_n}_{L^2(D)} \to \infty \text{ as } n \to \infty. \]
	We conclude observing that $v_n:= u_n - h_n$ satisfies the boundary value problem \eqref{eq:Robin} because $u_n = h_n =0 $ on $\partial D \setminus E$ and 
	\begin{equation*}\left\{\begin{aligned}
-\Delta v_n &= - \Delta u_n = da_n \wedge db_n
 &&\text{ in } D\\
\alpha \frac{\partial v_n}{\partial \nu} +\beta \frac{\partial v_n}{\partial \tau} + \gamma v_n &= l_n - B(h_n) =0 &&\text{ on } E. \end{aligned}\right.\end{equation*}
The blow-up of the $H^{1}$-norm now follows since 
\[ \norm{\nabla v_n}_{L^2(D)}\ge \norm{\nabla h_n}_{L^2(D)}- \norm{\nabla u_n}_{L^2(D)} \to \infty. \]
\end{proof}

As before we obtain as a consequence of Theorem \ref{prop:Robinboundary} the following:

\begin{corollary}\label{cor.single element for robin}
There exists $a,b \in H^1(D)$ with the additional properties $a,b \in L^\infty(D)$ and $da, db \in L^{2,1}(D)$ such that if $u \in W^{1,1}(D)$ denotes the solution to the problem \eqref{eq:Robin} with $f=da \wedge db$ then $u \notin H^1(D)$. 
\end{corollary}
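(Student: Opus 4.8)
The plan is to mimic the derivation of Corollary \ref{cor.single element for neumann} (which in turn is the functional-analytic abstraction of Theorem \ref{prop:Robinboundary}), replacing the Dirichlet-to-Neumann operator $A$ by the operator $B\colon X\to Z^*$ of Lemma \ref{lem.boundaryoperator}. The key point is that Theorem \ref{prop:Robinboundary} already produces, for a fixed $r_0$, a sequence $a_n,b_n\in C^\infty(\overline D)$ supported in a small ball $B_{r_0}(-i)\subset\{x:\,\partial D\cap B_{r_0}(x)\subset E\}$, with $\norm{a_n}_{L^\infty}+\norm{\nabla a_n}_{L^{2,1}}$ and the analogous quantity for $b_n$ uniformly bounded, $a_n,b_n\rightharpoonup 0$ in $H^1$, and $\norm{da_n\wedge db_n}_{H^{-1}(D)}\to\infty$. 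First I would record that, by the weak convergence in $H^1$ together with the uniform $L^\infty$ and $L^{2,1}$ bounds, we may pass to a subsequence for which the series $\sum_n 2^{-n} a_n$, $\sum_n 2^{-n} b_n$ (or a suitable lacunary subseries) converges in $H^1$, in $L^\infty$, and in $W^{1}L^{2,1}$; this is the standard device used to manufacture a single pair $(a,b)$ out of a blow-up sequence. Set $a:=\sum_k 2^{-k} a_{n_k}$, $b:=\sum_k 2^{-k} b_{n_k}$.

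Next I would argue by contradiction: suppose the solution $u\in W^{1,1}(D)$ to \eqref{eq:Robin} with $f=da\wedge db$ lies in $H^1(D)$. Then $u\in Y$ and, exactly as in the proof of Theorem \ref{prop:Robinboundary}, the functional $l:=\alpha\frac{\partial u}{\partial\nu}+\beta\frac{\partial u}{\partial\tau}+\gamma u$ is a well-defined element of $Z^*$, with $\norm{l}_{Z^*}<\infty$ precisely because $u\in H^1$. Writing $u=h+w$ where $h:=B^{-1}(l)\in X$ and $w$ solves the Dirichlet problem with the same right-hand side, Wente's theorem bounds $\norm{\nabla w}_{L^2}$ by $C\norm{\nabla a}_{L^2}\norm{\nabla b}_{L^2}<\infty$, so $h\in H^1$ as well. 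The contradiction will come from localizing: the bilinearity of $f=da\wedge db$ and the fact that the $a_{n_k}$ (resp.\ $b_{n_k}$) all vanish outside $B_{r_0}(-i)$ means that the "diagonal" part $\sum_k 4^{-k} da_{n_k}\wedge db_{n_k}$ contributes, via the same Dirichlet-to-Neumann argument, functionals of $H^{-1}$-norm bounded below by $c\,4^{-k}\norm{da_{n_k}\wedge db_{n_k}}_{H^{-1}(D)}$ on suitable test functions concentrating near $-i$; choosing the subsequence to grow fast enough that $4^{-k}\norm{da_{n_k}\wedge db_{n_k}}_{H^{-1}}\to\infty$ forces $\norm{l}_{Z^*}=\infty$, hence $u\notin H^1(D)$.

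The main obstacle is the last step: the off-diagonal cross terms $da_{n_j}\wedge db_{n_k}$ with $j\neq k$ must be controlled so that they do not cancel the blow-up coming from the diagonal. Here one uses that all the supports shrink towards the single point $-i$ and that the blow-up profile is (after the Möbius rescaling) a fixed bump of mass $\pi$ concentrating at $0$; testing against a function equal to $G_N(\cdot,-i)$ cut off near $-i$, or more simply against a fixed $\psi\in C^\infty(\overline D)$ with $\psi(-i)\neq 0$ and $\psi=0$ on $\partial D\setminus E$, the pairing $\langle l,\psi\rangle$ equals $\alpha\int_D\nabla u\cdot\nabla\psi - \alpha\int_D f\,\psi$, and the second term is, up to the harmless cross terms which tend to $0$, comparable to $\psi(-i)\cdot\pi\sum_k 4^{-k}$-type contributions against rescaled test functions — so the divergence is read off directly from the distributional convergence $da_{n_k}\wedge db_{n_k}\to\pi\delta_{-i}$ established in the proof of Theorem \ref{prop.counterexample}. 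Once the cross terms are shown to be summable and bounded (which follows from $\norm{\nabla a_{n_j}}_{L^2}\norm{\nabla b_{n_k}}_{L^2}\le C$ and the $2^{-j-k}$ weights), the argument closes; the rest (membership of $a,b$ in $L^\infty$ and of $da,db$ in $L^{2,1}$, weak convergence) is inherited termwise from \eqref{eq:blowup-sequence} and standard interpolation-space facts.
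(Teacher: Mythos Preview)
Your proposal contains a genuine gap at the step where you control the off-diagonal cross terms. You assert that the cross terms $\sum_{j\neq k}2^{-j-k}\,da_{n_j}\wedge db_{n_k}$ are ``summable and bounded'' because $\norm{\nabla a_{n_j}}_{L^2}\norm{\nabla b_{n_k}}_{L^2}\le C$. This estimate only gives an $L^1$ bound on each cross term, and in two dimensions $L^1(D)$ does \emph{not} embed into $(H^1(D))^*$; indeed the whole point of the counterexample is that $f\in L^1$ with Jacobian structure need not lie in $(H^1)^*$ once the test functions are allowed not to vanish on $\partial D$. Wente's inequality gives you a bound in $(H^1_0)^*$, not in $(H^1)^*=H^{-1}$, and that distinction is exactly what the paper exploits. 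So nothing you wrote rules out that the cross terms themselves have $\norm{\cdot}_{Z^*}=\infty$ and cancel the diagonal blow-up.

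There is also a confusion in the diagonal step: testing against a \emph{fixed} $\psi\in C^\infty(\overline D)$ gives a finite number (since $f=da\wedge db\in L^1$), so the sentence ``comparable to $\psi(-i)\cdot\pi\sum_k 4^{-k}$-type contributions'' cannot produce divergence. You would need a $k$-dependent family $\psi_k\in Y$, $\norm{\nabla\psi_k}_{L^2}\le1$, extracting the $k$-th bump; but then you must also bound $\langle 4^{-j}\,da_{n_j}\wedge db_{n_j},\psi_k\rangle$ for $j\neq k$, and no mechanism for this is given.

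The paper's proof addresses precisely these two points. It does not simply take $a=\sum 2^{-k}a_{n_k}$, $b=\sum 2^{-k}b_{n_k}$. Instead it runs a Gram--Schmidt-type induction in the \emph{image} Hilbert space: at each step it subtracts a finite-rank projection $Q_n$ so that the resulting $y_i$ satisfy $\langle Ay_i,Ay_j\rangle=0$ for $i\neq j$ (this handles the diagonal interaction), and it rescales the new pair $(h_{n+1},k_{n+1})$ by a large constant depending on $\sum_{j\le n}(\norm{dh_j}_{L^\infty}+\norm{dk_j}_{L^\infty})$ so that the bilinear cross terms $dh_i\wedge dk_j$ with $i\neq j$ are controlled in $L^2$ (not merely $L^1$), which \emph{does} embed in $H^{-1}$. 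Only with this extra $L^2$ control of the remainders can one legitimately isolate the divergent diagonal contribution.
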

Its combined proof with Corollary \eqref{cor.single element for neumann} can be found in the appendix.

\appendix
\section{abstract functional analytic argument}
Now we want to present the abstract functional analytic argument that leads to Corollary \ref{cor.single element for neumann} and \ref{cor.single element for robin}. We will first proof an "easier" version where every embedding of the involved spaces is linear. Thereafter we show how the same idea translates to our setting. 

\begin{lemma}\label{lem.functional analytic argument}
Given Banach spaces $E_1 \subset E_2$ and $F_1 \subset F_2$ such that the inclusion, $\subset$, corresponds to an continuous embedding. Let $A: E_2 \to F_2$ be a continuous linear operator. Suppose that $F_1$ is a Hilbert space and there is a sequence $\{x_n\}_{n \in \N}$ with the properties that 
\begin{itemize}
\item[(a)] $Ax_n \in F_1$ and $\norm{x_n}_{E_1} \le 1$ for all $n \in \N$;
\item[(b)] $\limsup_{n \to \infty} \norm{Ax_n}_{F_1} = \infty$;
\item[(c)] $f\in F_1 \mapsto \langle Ax_n, f\rangle$ extends to a linear functional $l_n$ on $F_2$ for each $n$. 
\end{itemize}
Then there exists $x \in E_1$ such that $Ax \in F_2 \setminus F_1$ in the sense that there is a sequence $l_n \in F_2^*$ with $\norm{l_n}_{F_1^*} \le 1$ but 
\[ l_n( Ax ) \to \infty. \]
\end{lemma}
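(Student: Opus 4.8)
The plan is to exploit a Baire category / uniform boundedness argument to upgrade the family of blowing-up test elements $\{x_n\}$ into a single element $x \in E_1$ whose image under $A$ fails to lie in $F_1$. The key observation is condition (c): each $l_n$, a priori only defined on $F_1$, extends to an element of $F_2^*$, and after normalizing we may assume $\norm{l_n}_{F_1^*} \le 1$ (the functional $f \mapsto \langle Ax_n, f\rangle$ has $F_1$-norm exactly $\norm{Ax_n}_{F_1}$ by Riesz, which by (b) tends to $\infty$ along a subsequence, so dividing by this norm is harmless and in fact what forces the normalization). So the real content is: given a sequence $l_n \in F_2^*$ with $\norm{l_n}_{F_1^*} \le 1$, coming from elements $Ax_n$ with $\norm{x_n}_{E_1} \le 1$ and $\norm{Ax_n}_{F_1} \to \infty$, produce $x \in E_1$ with $\sup_n |l_n(Ax)| = \infty$.

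First I would set up the contradiction: suppose no such $x$ exists, i.e., for every $x \in E_1$ the sequence $(l_n(Ax))_n$ is bounded. Consider the maps $T_n : E_1 \to \R$ (or $\C$) given by $T_n(x) := l_n(Ax)$. Each $T_n$ is a bounded linear functional on $E_1$: indeed $\abs{T_n(x)} = \abs{l_n(Ax)}$, and since $Ax \in F_2$ with $\norm{Ax}_{F_2} \le \norm{A} \norm{x}_{E_2} \le C \norm{A}\norm{x}_{E_1}$ (using the continuous embedding $E_1 \subset E_2$), and $\norm{l_n}_{F_2^*}$ is \emph{some} finite number for each fixed $n$, we get $T_n \in E_1^*$. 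Under the contradiction hypothesis, $\sup_n \abs{T_n(x)} < \infty$ for every $x \in E_1$, so the Banach–Steinhaus theorem applied on the Banach space $E_1$ gives $\sup_n \norm{T_n}_{E_1^*} =: M < \infty$.

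Next I would extract the contradiction by testing against $x_n$ itself. We have $T_n(x_n) = l_n(Ax_n) = \langle Ax_n, Ax_n\rangle_{F_1} = \norm{Ax_n}_{F_1}^2$, using that $l_n$ restricted to $F_1$ represents the functional $f \mapsto \langle Ax_n, f\rangle$. On the other hand $\abs{T_n(x_n)} \le \norm{T_n}_{E_1^*} \norm{x_n}_{E_1} \le M \cdot 1 = M$. Hence $\norm{Ax_n}_{F_1}^2 \le M$ for all $n$, contradicting (b). Therefore the contradiction hypothesis fails, and there exists $x \in E_1$ with $\sup_n \abs{l_n(Ax)} = \infty$; passing to a subsequence along which $l_n(Ax) \to \infty$ (after possibly multiplying each $l_n$ by a unimodular scalar, which preserves $\norm{l_n}_{F_1^*} \le 1$) gives the conclusion.

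The main obstacle — and the only subtle point — is the normalization bookkeeping around (c): one must be careful that the $l_n$ appearing in the statement are the \emph{normalized} extensions, i.e.\ that the hypothesis $\norm{l_n}_{F_1^*} \le 1$ can be arranged while still having $l_n\bigr\vert_{F_1}$ proportional to $\langle Ax_n, \cdot\rangle_{F_1}$ with the proportionality constant $1/\norm{Ax_n}_{F_1}$, so that $l_n(Ax_n) = \norm{Ax_n}_{F_1} \to \infty$ rather than $\norm{Ax_n}_{F_1}^2$. Either normalization works for the Banach–Steinhaus step; one just has to be consistent. A secondary technical point is verifying $T_n \in E_1^*$ for each fixed $n$, which only requires $l_n \in F_2^*$ and continuity of $A$ together with the embedding $E_1 \hookrightarrow E_2$ — no uniformity in $n$ is needed there, the uniformity is precisely what Banach–Steinhaus produces. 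The extension of this argument to the non-linearly-embedded setting of Corollaries~\ref{cor.single element for neumann} and~\ref{cor.single element for robin} (where the ``embedding'' $V \mapsto da \wedge db$ is quadratic) will require replacing $E_1$ by a space of pairs and $A$ by the composition of the bilinear Jacobian map with the solution operator, but the Baire-category skeleton is identical.
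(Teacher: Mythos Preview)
Your argument is correct and takes a genuinely different route from the paper. The paper proceeds constructively: it builds, by induction, a sequence $\{y_i\}\subset E_1$ with $\|y_i\|_{E_1}\le 1$, $\langle Ay_i,Ay_j\rangle=0$ for $i\neq j$, and $\|Ay_i\|_{F_1}\ge 2^{2i}$ (a Gram--Schmidt-type orthogonalisation carried out on the $E_1$ side via a finite-rank projection $Q_n$), and then sets $x=\sum_i 2^{-i}y_i$ and $L_n=\sum_{i\le n}2^{-i}\langle Ay_i,\cdot\rangle/\|Ay_i\|_{F_1}$, computing $L_n(Ax)\ge n$ directly. Your proof is the non-constructive dual: normalise $l_n$ so that $\|l_n\|_{F_1^*}=1$, set $T_n=l_n\circ A\in E_1^*$, assume $\sup_n|T_n(x)|<\infty$ for all $x$, and invoke Banach--Steinhaus to get a uniform bound which $T_n(x_n)=\|Ax_n\|_{F_1}$ violates. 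This is shorter and cleaner for the lemma as stated.

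What the paper's approach buys is adaptability to the bilinear setting of Corollaries~\ref{cor.single element for neumann} and~\ref{cor.single element for robin}, where one needs a single pair $(a,b)\in H^1\times H^1$ with $A(da\wedge db)\notin H^1$. There the relevant map $(a,b)\mapsto A(da\wedge db)$ is \emph{quadratic}, and the uniform boundedness principle does not apply directly; the paper's explicit series construction, with the extra bookkeeping properties (1)--(3) controlling the cross terms $dh_i\wedge dk_j$, is precisely what survives the passage to the nonlinear case. Your closing remark that ``the Baire-category skeleton is identical'' is therefore too optimistic: for the corollaries one really does need something like the paper's hands-on condensation-of-singularities argument rather than an abstract Banach--Steinhaus step.
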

\begin{proof}
Passing to a sub sequence we may assume that the $\limsup$ in (b) is actually a limit. 

In a first step we show by induction that there exists $\{y_1, \dotsc , y_n \} \in E_1$ with the properties 
\begin{itemize}
\item[(i)] $\norm{y_i}_{E_1} \le 1$ for all $i$;
\item[(ii)] $\langle A y_i, A y_j \rangle = 0$  if $i \neq j$;
\item[(iii)] $\norm{Ay_i}_{F_1} \ge 2^{2i}$ for all $i$.
\end{itemize}

By $(b)$ there exists $m_1 \in \N$ such that $\norm{Ax_{m_1}}\ge 4$. Hence we may set $y_1:= x_{m_1}$. 

Now suppose $\{y_1, \dotsc y_n\}$ have been chosen. We define the linear continuous operator $P_n: F_1 \to F_1$ by
\[ P_n:= \sum_{i=1}^n \frac{ Ay_i \otimes Ay_i}{\norm{Ay_i}^2}. \] 
It is obvious that $P_n= P_n^t$ and $(ii)$ implies that $P_n^2=P_n$ i.e. $P_n$ is the orthogonal projection onto the finite dimensional space $V_n:=\operatorname{span}\{ Ay_1, \dotsc, Ay_n \}$. Hence $(P_n A): E_1 \to V_n$ is a continuous linear operator onto a finite dimensional vector space. Let $(P_n A)^{-1}: V_n \to \operatorname{span}\{ y_1, \dotsc y_n \} $ denote the inverse of the operator $(P_n A)$ restricted to the finite dimensional space $\operatorname{span}\{ y_1, \dotsc y_n \} $. We may define now the operator
\[ Q_n: E_1 \to E_1 \quad Q_n:= (P_n A)^{-1} \circ (P_n A). \]
We note that $Q_n$ is continuous and $Q_n^2 = Q_n$ hence $Q_n$ is a projection operator.  As a direct consequence we have as well that $(I - Q_n)$ is a continuous projection operator, here $I$ denotes the identity map on $E_2$.

By construction we have 
\begin{equation}\label{eq:the combined property of Q_n, P_n} P_n \, A \,( I - Q_n) =0. \end{equation}

The range of $Q_n$ is finite $(A Q_n)$ is a continuous operator and therefore \[ \limsup_{m \to \infty} \norm{ (A Q_n) x_m }_{F_1} < \infty.\] Hence we have
\[ \lim_{m \to \infty} \norm{ A(I-Q_n) x_m }_{F_1} \ge \lim_{m \to \infty} \norm{A x_m}_{F_1}  - \limsup_{m \to \infty} \norm{ (A Q_n) x_m}_{F_1} =\infty. \]
Thus there exists ${m_{n+1}} \in \N$ such that 
\[ \norm{ A(I-Q_n) x_{m_{n+1}} }_{F_1} > 2^{2(n+1)} \norm{ I - Q_n}. \]
We define $y_{n+1} = \frac{ (I-Q_n) x_{m_{n+1}}}{\norm{I-Q_n}}$. Clearly we have $\norm{y_{n+1}}_{E_1} \le 1$ and (iii) holds by the choice of $m_{n+1}$. Finally (ii) follows using that $P_n$ is a orthogonal projection, $Q_n$ a projection and \eqref{eq:the combined property of Q_n, P_n}:
\[ \langle A y_i, Ay_{n+1} \rangle = \langle P_n A y_i, A (I-Q_n) y_{n+1} \rangle = \langle P_n A y_i, ( P_n A (I-Q_n)) y_{n+1} \rangle = 0. \]

Having the sequence $\{y_i\}_{i \in \N}$ to our disposal we obtain $x$ as follows: For each $n$ we define the elements $z_n \in E_1$ and $f_n \in F_1$ by
\[ z_n:= \sum_{i=1}^n 2^{-i} y_i \text{ and } f_n:=\sum_{i=1}^n 2^{-i} \frac{ Ay_i}{\norm{Ay_i}_{F_1}}. \]
Since $E_1, F_1$ are Banachspaces we have that their limes exists: $z=\lim_{n\to \infty} z_n = \sum_{i=1}^\infty 2^{-i} y_i \in E_1$ and $f=\lim_{n\to \infty} f_n = \sum_{i=1}^\infty 2^{-i} \frac{ Ay_i}{\norm{Ay_i}_{F_1}}$. 

Assumption (c) implies that for each $i \in \N$ the map $f \in F_1 \mapsto \langle \frac{A y_i}{\norm{Ay_i}_{F_1}}, f\rangle$ extends to a continuous linear functional $l_i \in F_1^*$. Therefore the continuous linear functional $L_n:= \sum_{i=1}^n 2^{-i} l_i$ has the desired properties using (i)-(iii) since
\begin{align*} L_n(Az)&= \lim_{m \to \infty} L_n(Az_m) = \lim_{m \to \infty} \langle f_n , Az_m \rangle\\
&= \lim_{m \to \infty} \sum_{i=1}^n \sum_{j=1}^m 2^{-i-j} \langle \frac{A y_i}{\norm{Ay_i}_{F_1}}, Ay_j \rangle = \sum_{i=1}^n 2^{-2i} \norm{Ay_i}_{F_1} \ge n .\end{align*}   
This completes the proof.
\end{proof}

Observe that we could directly apply the above result with the following choice of spaces: let $E_1= \mathcal{H}_{loc}^1(D)$ be the local Hardy space of the disk; $E_2= L^1(D)$, $F_1 =\{ f \in H^1(D) \colon \fint_D f = 0 \}$ and $F_2 = W^{1,1}(D)$. But this would not give single elements $a, b \in H^1(D)$ as stated in the Corollaries \ref{cor.single element for neumann}, \ref{cor.single element for robin}.

\begin{proof}[Proof of Corollary \ref{cor.single element for neumann} and \ref{cor.single element for robin}]
We introduce the space \[X:=\{ h \in H^1(D) \colon \fint_D h =0 \text{ and } dh \in L^{2,1}(D)\}.\] It becomes a complete Banach space with respect to the norm $\norm{h}_X:= \norm{dh}_{L^{2,1}}$. Furthermore as suggested before we set $E_2:=L^1(D), F_1:=H^1(D), F_2 = W^{1,1}(D)$. Observe that we have a 'bi-linear' linear embedding of $X \times X \hookrightarrow E_2$ by $(h,k) \mapsto dh \wedge dk$ with $\norm{dh \wedge dk}_{L^1}  \le \norm{dh}_{L^{2,1}}\norm{dk}_{L^{2,1}}$. 

The construction of $(a,b)$ out of the contradicting sequence is the same in case of Neumann or Robin type boundary condition. Hence we will give a simultaneous proof for both. We denote by $A: L^1(D) \to W^{1,1}(D)$ the solution operator to problem \eqref{eq:Neumann} or problem \eqref{eq:Robin} respectively. Recall that by classical elliptic theory there is a constant $C_A >0$ such that $\norm{Ax}_{H^1} \le C_A \norm{x}_{L^2}$.

Let $(a_n,b_n)\in C^\infty(\overline{D}, \R^2)$ be the corresponding contradicting sequence of Theorem \ref{prop.counterexample} or Theorem \ref{prop:Robinboundary}. Without loss of generality we may assume that $\fint a_n = 0 = \fint b_n$ for all $n$, hence $a_n, b_n \in X$. From now on we do not have to distinguish the cases anymore.

We will now proceed very similar as in Lemma \ref{lem.functional analytic argument}. 
By induction we show the existence of a sequence $\{ y_1, y_2, \dotsc, y_n\} \in L^1(D) \cap C^\infty(\overline{D})$ with the properties 

\begin{itemize}
\item[(i)] $\norm{y_i}_{L^1} \le 1$ for all $i$;
\item[(ii)] $\langle A y_i, A y_j \rangle = 0$  if $i \neq j$;
\item[(iii)] $\norm{Ay_i}_{F_1} \ge 2^{3i}$ for all $i$.
\end{itemize}
Simultaneously we will construct a sequence of tuples $(h_i,k_i) \in X\cap C^\infty(\overline{D}) \times  X\cap C^\infty(\overline{D}), i=1, \dotsc, n$ s.t. 
\begin{itemize}
\item[(1)] $\norm{h_i}_{L^\infty} + \norm{dh_i}_{L^{2,1}}+\norm{k_i}_{L^\infty}+\norm{dk_i}_{L^{2,1}} \le 1$
\item[(2)] $dh_i \wedge dk_i = y_i + R_i$ with $\norm{R_i}_{L^2} \le 1$
\item[(3)] $\norm{dh_i}_{L^2} + \norm{dk_i}_{L^2} \le \left(1 + \sum_{j<i} \norm{dh_j}_{L^\infty} + \norm{dk_j}_{L^\infty}\right)^{-1}$. 
\end{itemize}

We start the induction by choosing $(a_1,b_1)$ in the contradicting sequence such that $\norm{ A (da_1 \wedge db_1)}>2^2$. We set $y_1= da_1 \wedge db_1$ and $(h_1, k_1) = (a_1, b_1)$. All properties are clearly satisfied ($R_1  =0$).

Now suppose that we have chosen $y_i, (h_i,k_i)$ for $i=1, \dotsc, n$. We want to construct $y_{n+1}$ and the tuple $(h_{n+1}, k_{n+1})$. 
As in the previous lemma we define the projection operators 
\[ P_n:= \sum_{i=1}^n \frac{ Ay_i \otimes Ay_i}{\norm{Ay_i}^2}; \quad Q_n:= (P_nA)^{-1} (P_nA). \] Here $(P_nA)^{-1}$ denotes as before the inverse of $(P_nA)$ if restricted to the space $\operatorname{span}\{ y_1, \dotsc, y_n\}$. Hence 
for all $x \in L^1(D)$ we have $Q_nx = \sum_{i=1}^n \alpha_i y_i$ and the existence of a constant $C_n>0$ such that $\sum_{i=1}^n \abs{\alpha_i} \le C_n$ for all $x \in L^1(D)$ with $\norm{x}_{L^1} \le 1$. Furthermore due the properties of the contradicting sequence there exist $m \in \N$ such that 
\[ \norm{ A(I-Q_n) da_m \wedge db_m}_{H^1} \ge 2^{3(n+1)} C^2_n\left(n+3 + \sum_{j\le n} \norm{dh_j}_{L^\infty} + \norm{dk_j}_{L^\infty}\right)^2. \]
Let $Q_n da_m \wedge db_m = \sum_{i=1}^n \alpha_i y_i$, and define the elements
\[ \tilde{y}_{n+1}:= (I-Q_n) da_m \wedge db_m \quad \tilde{h}_{n+1}:= a_m - \sum_{i=1}^n \alpha_i h_i \quad \tilde{k}_{n+1} := b_m + \sum_{i=1}^n k_i. \]
We calculate 
\begin{align*}
d\tilde{h}_{n+1}\wedge d\tilde{k}_{n+1} =& da_m \wedge db_m  - \sum_{i=1}^n \alpha_i dh_i \wedge dk_i\\& + d\left(- \sum_{i=1}^n \alpha_i h_i \right) \wedge db_m + da_m \wedge \left(\sum_{i=1}^n k_i \right)\\& - \left( \sum_{i<j} (\alpha_i dh_i \wedge dk_j + \alpha_j dh_j \wedge dk_i \right)\\ 
\overset{(2)}{=}& da_m \wedge db_m  - \sum_{i=1}^n \alpha_i y_i - \sum_{i=1}^n \alpha_i R_i + (I) +(II) +(III).
\end{align*}
We estimate the size of the reminder terms in $L^2(D)$: Due to (2), we have $\norm{\sum_{i=1}^n \alpha_i R_i}_{L^2} \le C_n$. The terms $(I), (II)$ are similarly estimated by 
\begin{align*} \norm{d\left(- \sum_{i=1}^n \alpha_i h_i \right) \wedge db_m}_{L^2} &\le \left(\sum_{i=1}^n \abs{\alpha_i} \norm{dh_i}_{L^\infty} \right) \norm{db_m}_{L^2} \\
\norm{da_m \wedge d\left(\sum_{i=1}^n k_i \right) }_{L^2} &\le \left(\sum_{i=1}^n \norm{dk_i}_{L^\infty} \right) \norm{da_m}_{L^2}.
\end{align*}
Adding both we obtain $\norm{(I)}_{L^2}+\norm{(II)}_{L^2} \le C_n\left(1 + \sum_{j\le n} \norm{dh_j}_{L^\infty} + \norm{dk_j}_{L^\infty}\right)$.
The last term can be estimated as well using only property (3) by 
\begin{align*} \norm{(III)}_{L^2} &\le \sum_{i=1}^n \abs{\alpha_i} \norm{dh_i}_{L^2}\left( \sum_{j<i} \norm{dk_j}_{L^\infty} \right) +   \norm{dk_i}_{L^2}\left( \sum_{j<i} \abs{\alpha_j}\norm{dh_j}_{L^\infty} \right)\\
&\le \left(\sum_{i=1}^n \abs{\alpha_i}\right) + \sup_{j\le n} \abs{\alpha_j} n \le (n+1) C_n.
\end{align*}
Hence we found that $\norm{\tilde{R}_{n+1}}_{L^2} \le C_n \left(n+3 + \sum_{j\le n} \norm{dh_j}_{L^\infty} + \norm{dk_j}_{L^\infty}\right)$ where 
$\tilde{R}_{n+1} = - \sum_{i=1}^n \alpha_i R_i + (I)+ (II) + (III)$ and
\[ d\tilde{h}_{n+1} \wedge d\tilde{k}_{n+1} = (I-Q_n)da_m \wedge db_m + \tilde{R}_{n+1} = \tilde{y}_{n+1} + \tilde{R}_{n+1}.\] 
The desired functions are now simply $y_{n+1}= \frac{\tilde{y}_{n+1}}{\lambda_n}, h_{n+1}= \frac{\tilde{h}_{n+1}}{\lambda_n}, k_{n+1}= \frac{\tilde{k}_{n+1}}{\lambda_n}$ with $\lambda_n = C_n \left(n+3 + \sum_{j\le n} \norm{dh_j}_{L^\infty} + \norm{dk_j}_{L^\infty}\right)$.  

Having established the existence of the sequences $y_i, h_i, k_i$ with the claimed properties we construct $a,b\in X$ and a sequence $f_n \in H^1(D)=F_1$ very similar as in the proof to Lemma \ref{lem.functional analytic argument}:
Due to (1) and the fact that $X$ is a complete Banach space we can define elements
\[ a:= \sum_{i=1}^\infty 2^{-i} h_i, \quad b:= \sum_{i=1}^\infty 2^{-i} k_i. \]
Furthermore for each $n \in \N$ let 
\[ f_n:= \sum_{i=1}^n 2^{-i} \frac{ A{y_i}}{\norm{ Ay_i}_{H^1}}. \]
Observe that $f_n$ is a finite sum of $C^1$-functions, hence itself $C^1$ and can therefore be considered as an element of $(L^1)^*=L^\infty$. 
It remains to check that $\lim_{n \to \infty} \int_{D} f_n A(da \wedge db) = + \infty$. 
We have
\[ A(da \wedge db) = \lim_{m \to \infty} \sum_{i=1}^m 2^{-2i} A(dh_i \wedge dk_i)  + \sum_{i<j}^m 2^{-i-j} A\left( dh_i \wedge dk_j + dh_j \wedge dk_i\right). \]
Using (2) we estimate 
\begin{align*} \langle \frac{Ay_k}{\norm{Ay_k}_{H^1}} , A(dh_i \wedge dk_i) \rangle &= \langle \frac{Ay_k}{\norm{Ay_k}_{H^1}}, Ay_i + AR_i \rangle\\ &\ge \delta_{ki} \norm{Ay_i}_{H^1} - C_A \norm{R_i}_{L^2} \ge \delta_{ki} \norm{Ay_i}_{H^1} - C_A. \end{align*}
Hence 
\[  \sum_{i=1}^m 2^{-2i} \langle \frac{Ay_k}{\norm{Ay_k}_{H^1}} , A(dh_i \wedge dk_i) \rangle \ge 2^{-2k} \norm{Ay_k}_{H^1} - \lim_{m\to\infty} \sum_{i=1}^m 2^{-2i} C_A \ge 2^{k} - C_A.\] 
Using (3) we get 
\begin{align*}
&\sum_{i<j}^m 2^{-i-j} \norm{ A\left( dh_i \wedge dk_j + dh_j \wedge dk_i\right)}_{H^1} \\
&\le C_A \sum_{i<j}^m 2^{-i-j} \left( \norm{dh_i}_{L^2} \norm{dk_j}_{L^\infty} + \norm{dh_j}_{L^\infty} \norm{dk_i}_{L^2} \right)\\
&\le C_A \sum_{i=1}^m 2^{-i} 2 \le 2 C_A.
\end{align*}
Finally combining both we obtain
\[ \langle \frac{Ay_k}{\norm{Ay_k}_{H^1}} , A(da \wedge db) \rangle \ge 2^{k} - 3C_A. \] 
This completes the estimate since 
\begin{align*}
\int_D  f_n A(da\wedge db) = \sum_{k=1}^n 2^{-k} \langle \frac{Ay_k}{\norm{Ay_k}_{H^1}}, A(da \wedge db) \rangle \ge n - 3 C_A.
\end{align*}
\end{proof}

\bibliographystyle{plain}
\bibliography{Wente_counterexample.bib}

\end{document}